\def\ep{\epsilon}
\def\sub{\subset}
\DeclareMathOperator{\sgn}{sgn}
\newcommand{\R}{{\mathbb{R}}}
\newcommand{\Z}{{\mathbb{Z}}}
\newtheorem{df}{Definition}
\newtheorem{lem}{Lemma}
\newtheorem{rem}{Remark}
\newtheorem{thm}{Theorem}
\title{Exactly self-similar blow-up of the generalized De Gregorio equation}
\author{Fan Zheng
}
\begin{document}

\maketitle
\renewcommand{\thefootnote}{}
\renewcommand{\thefootnote}{\arabic{footnote}}

\begin{abstract}
We study exactly self-similar blow-up profiles fot the generalized De Gregorio model for the three-dimensional Euler equation:
\[
w_t + auw_x = u_xw, \quad u_x = Hw
\]
We show that for any $\alpha \in (0, 1)$ such that $|a\alpha|$ is sufficiently small, there is an exactly self-similar $C^\alpha$ solution that blows up in finite time. This simultaneously improves on the result in \cite{ElJe} by removing the restriction $1/\alpha \in \Z$ and \cite{ElGhMa,ChHoHu}, which only deals with asymptotically self-similar blow-ups.
\end{abstract}

\pagebreak
\tableofcontents
\pagebreak

\section{introduction}\label{Intro}

The famous millennium problem of the global regularity of the motion of incompressible fluids in three-dimensional space concerns the Navier--Stokes equation:
\begin{align*}
u_t + u \cdot \nabla u &= -p + \nu\Delta u,\\
\nabla \cdot u = 0,
\end{align*}
where $u(x, t)$ is the velocity field and $p(x, t)$ is the scalar pressure.
The coefficient $\nu \ge 0$ reflects the viscosity of the fluid.
For inviscid fluid ($\nu = 0$) the equation reduces to the Euler equation.
The vanishing of $\nabla \cdot u$ captures the incompressibility condition.
The global wellposedness of the Euler and Navier--Stokes equations in three dimensions for smooth and decaying initial data is wide open,
attracting a great deal of research efforts.
The interested reader is referred to the surveys \cite{BeMa,Co,Fe,Gi,Ho}.

Let $\omega = \nabla \times u$ be the vorticity, which then satisfy
\[
\omega_t + (u \cdot \nabla)\omega = \omega \cdot \nabla u + \nu\Delta\omega.
\]
The second term on the left-hand side, known as the advection term,
has the effect of transporting the vorticity. Since $u$ is divergence free,
the advection term will not affect the $L^p$ norms of the vorticity.
The first term on the right-hand side, known as the vortex stretching term,
is only present in the three-dimensional case. Schematically $\nabla u \approx \omega$, so the vortex stretching term may be as bad as $\omega^2$ in the worst-case scenario, and may cause blow-up of the equation.
This is thought to be the crux of the millennium problem.

\subsection{The generalized De Gregorio Model}\label{DeG}

In \cite{DeG1,DeG2} De Gregorio proposed a one-dimensional equation to model the competition between advection and vortex stretching in the Euler equation. The equation belongs to the family
\[
w_t + auw_x = u_xw, \quad u_x = Hw
\]
where $H$ denotes the Hilbert transform, and $a$ is a real parameter
quantifying the relative strength of advection, modeled by $uw_x$,
and vortex stretching, modeled by $u_xw$. Note that the relation $u_x = Hw$ also mimics the Biot--Savart law relating the velocity and the vorticity.
It turns out that this equation also models a variety of other equations,
including the surface quasi-geostrophic equation, see \cite{ElJe}.
Other similar 1D models of the Euler equation can be found in \cite{Ki}.

What De Gregorio studied in \cite{DeG1} is the case $a = 1$,
which mirrors the Euler equation. The special case $a = 0$ had appeared in Constantin--Lax--Majda \cite{CLM} and had been known to develop finite-time singularity for all non-trivial initial data. The generalization to arbitrary $a$ was done by Okamoto--Sakajo--Wunsch \cite{OkSaWu}.
When $a < 0$, advection and vortex stretching cooperate to cause a blow-up,
as shown in \cite{CaCo}. When $a > 0$, they fight against each other
and the picture is more interesting. For small $a$, smooth blow-up solutions were found by Elgindi--Jeong \cite{ElJe}, Elgindi--Ghoul--Masmoudi \cite{ElGhMa} and Chen--Hou--Huang \cite{ChHoHu}.
In general, $C^\alpha$ blow-ups were found for $|a\alpha|$ small enough.
A numerical investigation of the behavior of the solution with different values of $a$ can be found in Lushnikov--Silantyev--Siegel \cite{LuSiSi}.

\subsection{The result}\label{Result}

This note improves on the known results on the $C^\alpha$ blow-ups. The ones constructed in \cite{ElJe} is exactly self-similar, i.e., of the form
\[
w(x, t) = \frac{1}{1 - t}W\left( \frac{x}{(1 - t)^{(1+\lambda)/\alpha}} \right)
\]
where $W \in C^\alpha$, but with the restriction that $1/\alpha \in \Z$.
The construction in \cite{ElGhMa,ChHoHu} works for all $\alpha \in (0, 1)$,
as long as $|a\alpha| \ll 1$, but the solution is not exactly self-similar,
but only asymptotically so. In this note we fill the gap by constructing exactly self-similar blow-up solutions for all $\alpha \in (0, 1)$,
provided that $|a\alpha| \ll 1$. Specifically we show that

\begin{thm}\label{Thm}
There is $c > 0$ such that if $|a\alpha| < c$, then there are $W(\cdot;a) \in C^\alpha$ and $\lambda(a) \in \R$ such that
\[
w(x, t) = \frac{1}{1 - t}W\left( \frac{x}{(1 - t)^{1+\lambda(a)}};a \right)
\]
is a self-similar solution. Moreover, $W(\cdot;a)$ and $\lambda(a)$ are analytic in $a$.
\end{thm}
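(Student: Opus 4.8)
The plan is to reduce the PDE to an ODE/fixed-point problem for the profile $W$ and the scaling exponent $\lambda$, and then solve it perturbatively around the exactly solvable point $a = 0$ (equivalently around the Constantin--Lax--Majda equation). Plugging the self-similar ansatz $w(x,t) = (1-t)^{-1} W(x/(1-t)^{1+\lambda})$ into $w_t + auw_x = u_x w$ with $u_x = Hw$ yields a first-order ODE in the self-similar variable $y = x/(1-t)^{1+\lambda}$, schematically
\[
W + (1+\lambda) y W_y + a\, U W_y = (HW)\, W, \qquad U_y = HW,
\]
where $U$ is the primitive of $HW$ normalized suitably (e.g.\ $U(0) = 0$, using oddness). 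So I must produce an odd profile $W \in C^\alpha$ and a real number $\lambda$ solving this. The $C^\alpha$ (non-smooth at the origin) nature forces $W$ to behave like $|y|^\alpha \operatorname{sgn}(y)$ times a bounded correction near $0$; the exponent in the scaling is pinned to $1+\lambda$ precisely so that this homogeneity is consistent.

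At $a = 0$ the system decouples into $W + (1+\lambda) y W_y = (HW) W$, which is the profile equation for the Constantin--Lax--Majda model, and this has an explicit one-parameter family of $C^\alpha$ self-similar profiles for a specific value of $\lambda = \lambda_0(\alpha)$ (one can write $W_0$ down explicitly using the algebraic structure of CLM: $f = HW + iW$ satisfies a Riccati-type relation, giving $W_0(y) \sim$ an explicit rational-in-$|y|^\alpha$ expression). I would fix a convenient normalization (say, the coefficient of the leading $|y|^\alpha$ term, or $W(1)$, or an $L^2$/weighted norm) to remove the scaling degeneracy, thereby isolating a nondegenerate solution $(W_0, \lambda_0)$. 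The main analytic input is then an implicit function theorem in a Banach space: set up $F(W, \lambda; a) = 0$ where $F$ maps a neighborhood of $(W_0, \lambda_0)$ in (an appropriate $C^\alpha$-type space of odd functions) $\times\, \R$ into a target space, check $F(W_0, \lambda_0; 0) = 0$, and verify that the linearization $D_{(W,\lambda)}F(W_0,\lambda_0;0)$ is invertible. Analyticity of $F$ in all arguments (it is affine in $a$, and the nonlinearity $W \mapsto (HW)W$ is quadratic, hence analytic, once mapping properties of $H$ on the chosen space are in hand) then delivers, via the analytic implicit function theorem, solutions $W(\cdot;a)$, $\lambda(a)$ analytic in $a$ for $|a|$ small — and tracking the $\alpha$-dependence gives the quantitative threshold $|a\alpha| < c$.

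The key steps, in order: (i) derive the profile equation and identify the correct function space — a weighted $C^\alpha$ space built so that $H$ acts boundedly and the leading singular behavior $|y|^\alpha\operatorname{sgn} y$ at the origin and the decay at infinity are both captured (this choice is where the analysis really lives); (ii) solve the $a=0$ problem explicitly and pin down $(W_0, \lambda_0)$ after imposing a normalization; (iii) compute the linearized operator at $(W_0,\lambda_0,0)$ — it will be of the form $W \mapsto W + (1+\lambda_0) y W_y - (HW_0)W - (HW)W_0$ plus a $\partial_\lambda$ direction $\dot\lambda\, y W_{0,y}$ — and prove it is Fredholm of index $0$ with trivial kernel (the trivial kernel being exactly the statement that the normalization kills the scaling symmetry, and that no other bounded solutions of the homogeneous equation exist); (iv) invoke the analytic implicit function theorem and unwind the estimates to get the $|a\alpha| < c$ range.

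The main obstacle I expect is step (iii): establishing invertibility of the linearized operator on the singular $C^\alpha$ space. The operator is a zeroth-plus-first-order ODE operator with non-local coefficients coming from $H$, and at the origin the transport coefficient $(1+\lambda_0)y$ degenerates, so it is a singular ODE whose indicial roots at $y = 0$ (and the behavior at $y = \infty$) must be analyzed to rule out a nontrivial kernel and to get the right codimension. One must show the only homogeneous solution lying in the space is the scaling mode $yW_{0,y}$ (removed by the normalization), which amounts to a careful Wronskian/variation-of-parameters argument combined with the explicit form of $W_0$ and sharp mapping bounds for $H$ on weighted $C^\alpha$; controlling how the relevant constants depend on $\alpha$ (so that smallness is really governed by $a\alpha$, not $a$ alone) is the quantitative heart of the matter.
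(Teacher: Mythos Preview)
Your high-level strategy---derive the profile equation, use the explicit CLM solution at $a=0$ as base point, linearize, prove invertibility, apply the analytic implicit function theorem---is exactly what the paper does. Where you diverge from the paper is in the technical implementation of each step, and the paper's choices are considerably more concrete than yours.

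First, instead of working in ``a weighted $C^\alpha$ space'' (which you correctly flag as the delicate choice), the paper performs the change of variable $\tilde f(x) = f(x^{1/\alpha})$ and works in the Sobolev-type space $\bar H^2_0 = \{f \in H^2: xf_x \in H^2,\ f(0)=0\}$ in the new variable. This converts the $C^\alpha$ singularity at the origin into smooth behavior, and reduces the Hilbert transform to an explicit integral operator $H^{(r)}$, $r = 1/\alpha$, whose $H^2_0 \to H^2$ norm is shown to be $\lesssim r = 1/\alpha$ by direct kernel estimates. Your ``sharp mapping bounds for $H$ on weighted $C^\alpha$'' are thereby replaced by elementary one-variable integral inequalities.

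Second, for the linearized operator $L$ the paper does not argue via Fredholm index plus kernel analysis; it writes down an explicit integral formula for $L^{-1}$ (a variation-of-parameters expression against the explicit $a=0$ profile) and bounds it directly in $\bar H^2$. No indicial-root or Wronskian discussion is needed.

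Third, your ``tracking the $\alpha$-dependence'' is made precise by a rescaling $W = \alpha\Omega$: every occurrence of $\tilde H$ (costing $1/\alpha$) then comes paired with a factor of $\alpha$ or of $\sin(\alpha\pi/2)$, so all bounds become uniform in $\alpha$ and smallness is governed by $|a\alpha|$ alone. Your sketch is a valid roadmap, but the paper's route sidesteps the obstacles you anticipate in step (iii) rather than confronting them.
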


\subsection{The method}\label{Method}

We mostly follow \cite{ElJe}. Plugging the ansatz in the equation
\[
w_t + auw_x = u_xw, \quad u_x = Hw = -|\nabla|^{-1}w_x
\]
we get the steady-state equation for $W$:
\[
F := \left( \frac{1 + \lambda}{\alpha}x + aU \right)W_x + (1 - U_x)W = 0,
\quad U_x = HW.
\]
The explicit solution when $a = 0$ (see (4.2)--(4.3) of \cite{ElJe})
\begin{align*}
(\bar W, \bar\lambda) &= \left( -\frac{2\sin(\alpha\pi/2)|x|^\alpha\sgn x}{|x|^{2\alpha} + 2\cos(\alpha\pi/2)|x|^\alpha + 1}, 0 \right),\\
\bar U_x &= H\bar W = \frac{2(\cos(\alpha\pi/2)|x|^\alpha + 1)}{|x|^{2\alpha} + 2\cos(\alpha\pi/2)|x|^\alpha + 1}
\end{align*}
is odd and differentiable with respect to $|x|^\alpha$.
It suggests the change of variable $\tilde f(x) = f(x^{1/\alpha})$,
and the need to study the Hilbert transform
\begin{align*}
\widetilde{Hf}(x) = Hf(x^{1/\alpha})
&= \frac{1}{\pi}\int_{-\infty}^\infty \frac{f(y)dy}{x^{1/\alpha} - y}
= \frac{1}{\pi}\int_0^\infty \frac{2yf(y)dy}{x^{2/\alpha} - y^2}\\
&= \frac{1}{\alpha\pi}\int_0^\infty \frac{2z^{2/\alpha-1}\tilde f(z)dz}{x^{2/\alpha} - z^{2/\alpha}} \tag{with $f$ odd and $z = y^\alpha$}
\end{align*}
in the new variable. In terms of the new variable we need to solve
\[
F=(1 + \lambda)xW_x + a\widetilde{UW_x} + (1 - \widetilde{HW})\tilde W=0
, \quad U_x = HW.
\]
In Section \ref{Hilbert} we will generalize the estimates in \cite{ElJe} from the discrete range $1/\alpha \in \Z$
to the full range $\alpha \in (0, 1)$, using some delicate analysis of the integral kernel. Armed with these estimates, in Section \ref{Holder}
we will use the implicit function theorem to show that a perturbation of the explicit solution above exists as long as $|a\alpha|$ is small enough.
The argument mostly follows \cite{ElJe}, but is easier if we let
$F$ map $X := \{W \in H^2: xW_x \in H^2\}$ to $H^2$.
This way all terms in $F$ are automatically in $H^2$:
for the most difficult term $UW_x = (U/x)(xW_x)$ we will need the Hardy inequalities collected in Section \ref{Hardy}. Then we only need the $H^2 \to X$ bound of $(dF)^{-1}$ for the implicit function theorem to work and give us the desired solultion for small $|a\alpha|$.

\subsection{Acknowledgement}\label{Ack}

The research of the author was partially supported by ERC
(European Research Council) under Grant 788250.

\section{Hardy inequalities}\label{Hardy}

Here we record some useful Hardy-type inequalities in Sobolev spaces.

\begin{lem}[Kufner--Persson Theorem 4.3]\label{wtHardy}
If $p \in (1, \infty)$, $\gamma < (p - 1)/p$, $k \ge 1$ and $f(0) = \cdots = f^{(k-1)}(0) = 0$, then
\[
\|x^{\gamma-k}f(x)\|_{L^p} \lesssim_{k,p,\gamma} \|x^\gamma f^{(k)}\|_{L^p}.
\]
\end{lem}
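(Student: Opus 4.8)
The plan is to deduce the statement from the classical one-dimensional weighted Hardy inequality, which in turn I would prove by a simple scaling argument.

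First I would reduce to the half-line. Since for non-integer $\gamma$ the weight must be read as $|x|^\gamma$, and since $f$ and its first $k-1$ derivatives vanish at the origin, it is enough to prove the bound on $(0,\infty)$; the contribution of $(-\infty,0)$ is handled identically after $x \mapsto -x$. On $(0,\infty)$, Taylor's formula with integral remainder gives
\[
f(x) = \frac{1}{(k-1)!}\int_0^x (x-t)^{k-1} f^{(k)}(t)\,dt ,
\]
using $f(0) = \cdots = f^{(k-1)}(0) = 0$; this is immediate for $f \in C^k$ and extends to the relevant Sobolev class by density. Bounding $(x-t)^{k-1} \le x^{k-1}$ for $0 \le t \le x$ yields
\[
|x^{\gamma-k} f(x)| \le \frac{1}{(k-1)!}\, x^{\gamma-1}\!\int_0^x |f^{(k)}(t)|\,dt ,
\]
so it suffices to establish the $k=1$ estimate $\big\|x^{\gamma-1}\int_0^x h\big\|_{L^p(0,\infty)} \lesssim_{p,\gamma} \|x^\gamma h\|_{L^p(0,\infty)}$ for $h \ge 0$ and $\gamma < (p-1)/p$, and then apply it with $h = |f^{(k)}|$.

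For the $k=1$ estimate I would substitute $t = xs$, so that $\int_0^x h(t)\,dt = x\int_0^1 h(xs)\,ds$ and hence $x^{\gamma-1}\int_0^x h(t)\,dt = \int_0^1 x^\gamma h(xs)\,ds$. Minkowski's integral inequality in $L^p_x$ gives
\[
\Big\|x^{\gamma-1}\!\int_0^x h(t)\,dt\Big\|_{L^p_x} \le \int_0^1 \big\|x^\gamma h(xs)\big\|_{L^p_x}\,ds ,
\]
and the change of variables $u = xs$ shows $\|x^\gamma h(xs)\|_{L^p_x} = s^{-\gamma - 1/p}\|x^\gamma h\|_{L^p}$. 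The remaining integral $\int_0^1 s^{-\gamma - 1/p}\,ds$ converges exactly when $\gamma + 1/p < 1$, i.e.\ $\gamma < (p-1)/p$, with value $(1 - \gamma - 1/p)^{-1}$, giving the lemma with constant $\frac{1}{(k-1)!\,(1 - \gamma - 1/p)}$.

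There is no deep obstacle here — the result is classical (this is the cited Kufner--Persson theorem). The only points to watch are the density argument making Taylor's formula legitimate in the Sobolev setting, and checking that the hypothesis $\gamma < (p-1)/p$ is preserved by the intermediate steps (all exponents appearing are of the form $\gamma - j$ with $0 \le j \le k-1$, hence $\le \gamma$). One could equally run an induction on $k$ built on the same $k=1$ estimate: applying it to $g = f^{(k-1)}$ gives $\|x^{\gamma-1} f^{(k-1)}\|_p \lesssim \|x^\gamma f^{(k)}\|_p$, and the induction hypothesis with weight exponent $\gamma - 1$ (still admissible) gives $\|x^{\gamma-k} f\|_p \lesssim \|x^{\gamma-1} f^{(k-1)}\|_p$; chaining the two closes the induction. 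Either way, the sharp threshold $\gamma < (p-1)/p$ is precisely the integrability boundary of $\int_0^1 s^{-\gamma - 1/p}\,ds$, so the hypothesis cannot be weakened.
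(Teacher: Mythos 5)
Your proof is correct. Note, however, that the paper offers no proof of this lemma at all: it is quoted verbatim as Theorem 4.3 of Kufner--Persson, so there is no internal argument to compare yours against. What you give is the standard self-contained derivation of that classical weighted Hardy inequality: reduce to the half-line (reading the weight as $|x|^\gamma$), use Taylor's formula with integral remainder together with the crude bound $(x-t)^{k-1}\le x^{k-1}$ to reduce to the case $k=1$, and then prove the $k=1$ case by the substitution $t=xs$ plus Minkowski's integral inequality, with convergence of $\int_0^1 s^{-\gamma-1/p}\,ds$ exactly under the hypothesis $\gamma<(p-1)/p$. All steps check out (the Taylor formula needs only absolute continuity of $f^{(k-1)}$, so no real density issue arises, and your alternative induction on $k$ is equally valid since $\gamma-1<(p-1)/p$ is preserved). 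One genuine added value of your argument is the explicit constant $\bigl((k-1)!\,(1-\gamma-1/p)\bigr)^{-1}$: the paper later relies precisely on this kind of quantitative dependence of the Hardy constant on the weight exponent (in the proof of Lemma \ref{analytic}, where it cites Kufner--Persson again for a constant of order $1/(1-\ep)$ with $\ep$ tied to $1/\alpha$), so your derivation would let the paper justify that step without the external reference.
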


\begin{df}
For any integer $k \ge 0$, any number $p \in (1, \infty)$ and any weight $w(x) \ge 0$ define
\[
\|f\|_{W^{k,p}(w)} = \sum_{j=0}^\infty \|wf^{(j)}\|_{L^p},\quad
\|f\|_{\bar W^{k,p}(w)} = \|f\|_{W^{k,p}(w)} + \|xf'(x)\|_{W^{k,p}(w)}.
\]
Let $W^{k,p}_-(w)$ and $\bar W^{k,p}_-(w)$ denote the subspace of odd functions in $W^{k,p}(w)$.
\end{df}

\begin{df}
\[
If(x) = \int_0^x \frac{f(y) - f(0) - yf'(0)}{y^2}dy.
\]
\end{df}

\begin{lem}\label{If'-Hk}
If $k \ge 0$ and $p \in (1, \infty)$ then
\[
\|(If)'\|_{W^{k,p}} \lesssim_{k,p,\gamma} \|f''\|_{W^{k,p}} \le \|f\|_{W^{k+2,p}}.
\]
\end{lem}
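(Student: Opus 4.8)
The plan is to reduce the whole estimate to a single integral representation of $(If)'$ and then run a soft $L^p$ argument. First I would compute, straight from the definition, that
\[
(If)'(x)=\frac{f(x)-f(0)-xf'(0)}{x^2}.
\]
The point is that the numerator is exactly the second–order Taylor remainder of $f$ at $0$, so Taylor's formula with integral remainder, followed by the substitution $y=sx$, gives
\[
f(x)-f(0)-xf'(0)=\int_0^x f''(y)(x-y)\,dy=x^2\int_0^1(1-s)\,f''(sx)\,ds,
\]
and hence the clean identity $(If)'(x)=\int_0^1(1-s)\,f''(sx)\,ds$ (valid for $x$ of either sign, or on the half–line, according to the ambient domain). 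In other words $(If)'=T(f'')$ where $T$ is the dilation–averaging operator $Tg(x)=\int_0^1(1-s)g(sx)\,ds$, and the $x^{-2}$ singularity has disappeared.

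Next I would estimate $T$ on $W^{k,p}$. Differentiating under the integral sign, for $0\le j\le k$ one gets $\frac{d^j}{dx^j}(If)'(x)=\int_0^1(1-s)\,s^j f^{(j+2)}(sx)\,ds$, and Minkowski's integral inequality together with the scaling $\|g(s\,\cdot)\|_{L^p}=s^{-1/p}\|g\|_{L^p}$ yields
\[
\Big\|\tfrac{d^j}{dx^j}(If)'\Big\|_{L^p}\le\Big(\int_0^1(1-s)\,s^{\,j-1/p}\,ds\Big)\,\|f^{(j+2)}\|_{L^p}.
\]
The $s$–integral is a Beta integral, finite precisely when $j+1-1/p>0$; the worst case is $j=0$, where finiteness is equivalent to $p>1$ — which is exactly the standing hypothesis, and is where the hypothesis is used. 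Summing over $0\le j\le k$ and recognizing $\sum_{j=0}^{k}\|f^{(j+2)}\|_{L^p}=\|f''\|_{W^{k,p}}\le\|f\|_{W^{k+2,p}}$ gives the asserted bound with a constant depending only on $k$ and $p$.

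The only real issue — and the (mild) main obstacle — is justifying the two formal steps, the Taylor–remainder identity and differentiation under the integral sign. Both are immediate when $f\in C^{k+2}$, so the general case follows by density: one proves the estimate for smooth (say compactly supported) $f$ and extends, using that $(If)'$ is linear in $f''$ and that in one dimension $W^{k+2,p}\hookrightarrow C^{k+1}$ for $p>1$, so the a priori inequality persists under $W^{k+2,p}$–limits. (I also read $\|\cdot\|_{W^{k,p}}$ as the finite sum $\sum_{0\le j\le k}\|(\cdot)^{(j)}\|_{L^p}$, so that the two sides of the claimed inequality are the natural objects.)
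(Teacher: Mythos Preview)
Your argument is correct. The paper takes a different route: after reducing to $f(0)=f'(0)=0$ it writes $(If)'(x)=x^{-2}f(x)$, expands $(x^{-2}f)^{(k)}$ by the Leibniz rule as a combination of $x^{-2-j}f^{(k-j)}$, and appeals to the weighted Hardy inequality (Lemma~\ref{wtHardy}). You instead recognise $(If)'(x)=\int_0^1(1-s)f''(sx)\,ds$ via the integral Taylor remainder and bound the dilation average by Minkowski, which is more self-contained: the only outside input is the scaling $\|g(s\,\cdot)\|_{L^p}=s^{-1/p}\|g\|_{L^p}$, and the role of the hypothesis $p>1$ becomes transparent as the integrability of $s^{-1/p}$ near $0$. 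Your route also sidesteps a subtlety in the paper's sketch: the individual Leibniz terms $x^{-2-j}f^{(k-j)}$ need not lie in $L^p$ separately when $f''(0)\neq 0$ (for instance $x^{-2}f'$ and $x^{-3}f$ each behave like $x^{-1}$ near $0$), so one really has to exploit the cancellation in the full sum before Lemma~\ref{wtHardy} applies; your dilation formula has this cancellation built in. Conversely, the paper's approach, once properly organised, has the advantage that the same black-box Hardy lemma drives several of the neighbouring estimates in Section~\ref{Hardy}.
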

\begin{proof}
Without loss of generality assume $f(0) = f'(0) = 0$.
Then the result follows from Lemma \ref{wtHardy} because $(x^{-2}f(x))^{(k)}$ is a linear combination of $x^{-2-j}f^{(k-j)}(x)$, $0 \le j \le k$.
\end{proof}

\begin{lem}\label{gIf-Hk}
If $k \ge 0$ and $p \in (1, \infty)$ then
\begin{align*}
\left\| g(x)\int_0^x f(y)dy \right\|_{W^{k,p}}
&\lesssim_k \sum_{l=0}^k \left\| g^{(l)}(x)\int_0^x f(y)dy \right\|_{L^p}\\
&+ \sum_{\genfrac{}{}{0pt}{2}{n\ge1}{m+n\le k}} \|g^{(m)}f^{(n-1)}\|_{L^p}\\
&\lesssim_{k,p,\gamma} \sum_{l=0}^k \sup|xg^{(l)}(x)\|\|f\|_{L^p}\\
&+ 1_{k\ge1}\|g\|_{C^{k-1}}\|f\|_{W^{k-1,p}}
\tag{by Lemma \ref{wtHardy}}.
\end{align*}
\end{lem}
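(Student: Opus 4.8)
The plan is to differentiate the product $g(x)\int_0^x f(y)\,dy$ exactly $k$ times using the Leibniz rule, keeping the "undifferentiated integral" term separate from the genuinely differentiated ones, and then apply the weighted Hardy inequality (Lemma \ref{wtHardy}) to each piece. Concretely, $\left(g(x)\int_0^x f\right)^{(k)}$ is a sum over $l=0,\dots,k$ of binomial multiples of $g^{(l)}(x)\cdot\bigl(\int_0^x f\bigr)^{(k-l)}$. When $k-l=0$ the factor is $\int_0^x f$ itself; when $k-l\ge 1$ it equals $f^{(k-l-1)}$. Collecting the $l$ with $k-l=0$ (i.e. $l=k$, but more conveniently bounding all $l=0,\dots,k$ with the integral present) gives the first sum $\sum_{l=0}^k \|g^{(l)}(x)\int_0^x f\|_{L^p}$, and the remaining terms, writing $m=l$ and $n=k-l\ge 1$, give exactly $\sum_{n\ge1,\,m+n\le k}\|g^{(m)}f^{(n-1)}\|_{L^p}$ (we may be generous and allow $m+n\le k$ rather than $=k$ since those are the terms that appear, and extra terms only weaken the inequality). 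This establishes the first inequality, which is pure Leibniz bookkeeping.

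For the second inequality, the integral term is handled by writing $g^{(l)}(x)\int_0^x f(y)\,dy = \bigl(x g^{(l)}(x)\bigr)\cdot\frac1x\int_0^x f(y)\,dy$ and invoking the classical Hardy inequality $\|\frac1x\int_0^x f\|_{L^p}\lesssim_p \|f\|_{L^p}$, which is the $k=1$, $\gamma=0$ case of Lemma \ref{wtHardy} applied to $\int_0^x f$ (which vanishes at $0$); this yields $\|g^{(l)}(x)\int_0^x f\|_{L^p}\le \sup|xg^{(l)}(x)|\cdot\|\frac1x\int_0^x f\|_{L^p}\lesssim_p \sup|xg^{(l)}(x)|\,\|f\|_{L^p}$. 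The remaining terms $\|g^{(m)}f^{(n-1)}\|_{L^p}$ with $1\le n$, $m+n\le k$ are bounded by $\|g^{(m)}\|_{L^\infty}\|f^{(n-1)}\|_{L^p}\le \|g\|_{C^{k-1}}\|f\|_{W^{k-1,p}}$, since $m\le k-1$ and $n-1\le k-1$; this contributes only when $k\ge 1$, hence the factor $1_{k\ge1}$.

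The only genuine subtlety is making sure the hypotheses of Lemma \ref{wtHardy} are met — in particular that the relevant antiderivative vanishes to the right order at the origin so that the weighted Hardy estimate applies with the admissible exponent $\gamma=0<(p-1)/p$ — and that the Leibniz expansion is organized so that no term carries a negative power of $x$ acting on something that does not vanish at $0$. Here that is automatic: $\int_0^x f$ vanishes at $0$, so $\frac1x\int_0^x f$ is controlled, and all other terms involve only genuine derivatives of $g$ and $f$ with no weights. Thus the main (and essentially only) work is the careful combinatorial bookkeeping of the Leibniz rule; the analytic input is the single application of the classical Hardy inequality packaged in Lemma \ref{wtHardy}.
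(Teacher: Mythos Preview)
Your proof is correct and follows exactly the approach the paper intends: the paper gives no separate proof for this lemma, merely tagging the second inequality ``by Lemma \ref{wtHardy}'', and your Leibniz expansion followed by the factorization $g^{(l)}(x)\int_0^x f = (xg^{(l)}(x))\cdot x^{-1}\int_0^x f$ together with the $k=1$, $\gamma=0$ case of Lemma \ref{wtHardy} is precisely that argument spelled out.
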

\begin{rem}
If we discount the term where all the derivatives fall on the integral,
we have $1 \le n \le k - 1$ in the summation, so we only need the $W^{k-2,p}$ norm of $f$.
\end{rem}

\begin{lem}\label{If-Hk}
If $k \ge 0$ and $p \in (1, \infty)$ then
\[
\left\| \frac{xIf(x)}{1 + x^2} \right\|_{W^{k,p}}
\lesssim_{k,p,\gamma} \|f'\|_{W^{\max(k-1,1),p}}
\le \|f\|_{W^{\max(k,2),p}}.
\]
\end{lem}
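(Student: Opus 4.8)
The plan is to apply the Leibniz rule to $x\,If(x)/(1+x^2) = g(x)\,If(x)$, where $g(x) := x/(1+x^2)$, and to estimate each of the resulting terms in $L^p$. Note that $g$ is smooth and bounded and that $\sup_x|x\,g^{(l)}(x)| < \infty$ for every $l\ge 0$ (since $g^{(l)}(x) = O(x^{-l-1})$ at infinity while $g^{(l)}(0)$ is finite). So it suffices to bound $\|g^{(k-j)}(If)^{(j)}\|_{L^p}$ for $0\le j\le k$. The key structural input is Taylor's formula with integral remainder, $f(x)-f(0)-xf'(0) = x^2\int_0^1(1-t)f''(tx)\,dt$, which upon dividing by $x^2$ and differentiating gives
\[
(If)'(x) = \int_0^1(1-t)f''(tx)\,dt,\qquad (If)^{(j)}(x) = \int_0^1(1-t)t^{j-1}f^{(j+1)}(tx)\,dt\quad (j\ge 1).
\]
In particular $If$ depends on $f$ only through $f''$, which is why norms of $f'$ (one derivative above $f''$) appear on the right-hand side. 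By a standard density argument one may assume $f$ smooth while carrying out these manipulations.

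For $1\le j\le k-1$, i.e.\ when at least one derivative lands on the weight, I would estimate $\|g^{(k-j)}(If)^{(j)}\|_{L^p}\le\|g^{(k-j)}\|_{L^\infty}\|(If)^{(j)}\|_{L^p}$ and then invoke Minkowski's integral inequality together with the scaling identity $\|h(t\,\cdot\,)\|_{L^p} = t^{-1/p}\|h\|_{L^p}$:
\[
\|(If)^{(j)}\|_{L^p}\le\int_0^1(1-t)t^{j-1}\,\|f^{(j+1)}(t\,\cdot\,)\|_{L^p}\,dt = \Big(\int_0^1(1-t)t^{j-1-1/p}\,dt\Big)\|f^{(j+1)}\|_{L^p}\lesssim_{j,p}\|f^{(j+1)}\|_{L^p},
\]
the $t$-integral converging because $p>1$ and $j\ge1$; since $j+1\le k$, this is absorbed into $\|f'\|_{W^{\max(k-1,1),p}}$. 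The term $j=0$ is handled by Hardy's inequality (Lemma \ref{wtHardy} with $k=1$, $\gamma=0$): from $If(x) = \int_0^x(If)'$ one gets $\|If(x)/x\|_{L^p}\lesssim_p\|(If)'\|_{L^p}\lesssim_p\|f''\|_{L^p}$, whence $\|g^{(k)}\,If\|_{L^p}\le\big(\sup_x|x\,g^{(k)}(x)|\big)\,\|If(x)/x\|_{L^p}\lesssim_{k,p}\|f''\|_{L^p}$.

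The one delicate term is $j=k$ with $k\ge 2$: the formula above would give $\|(If)^{(k)}\|_{L^p}\lesssim\|f^{(k+1)}\|_{L^p}$, which is one derivative too many. I would remove this loss by an integration by parts in $t$, using $f^{(k+1)}(tx) = x^{-1}\partial_t\big(f^{(k)}(tx)\big)$; the boundary contributions vanish since $t^{k-1}$ kills the endpoint $t=0$ (here $k\ge2$ is used) and $1-t$ kills $t=1$, leaving
\[
(If)^{(k)}(x) = \frac1x\int_0^1 t^{k-2}\big(kt-(k-1)\big)f^{(k)}(tx)\,dt.
\]
The same identity can be obtained with no regularity beyond $f^{(k)}\in L^p$ by differentiating the representation $(If)^{(k-1)}(x) = x^{1-k}\!\int_0^x\! s^{k-2}f^{(k)}(s)\,ds - x^{-k}\!\int_0^x\! s^{k-1}f^{(k)}(s)\,ds$. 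Crucially, the prefactor $x^{-1}$ now cancels the factor $x$ in the numerator of $g$, so that
\[
g(x)(If)^{(k)}(x) = \frac1{1+x^2}\int_0^1 t^{k-2}\big(kt-(k-1)\big)f^{(k)}(tx)\,dt,
\]
and Minkowski's inequality and scaling give $\|g\,(If)^{(k)}\|_{L^p}\lesssim_{k,p}\big(\int_0^1 t^{k-2-1/p}\,dt\big)\|f^{(k)}\|_{L^p}\lesssim_{k,p}\|f^{(k)}\|_{L^p}$, the integral being finite exactly because $k\ge2$ and $p>1$. Summing the estimates over $0\le j\le k$ yields $\|x\,If/(1+x^2)\|_{W^{k,p}}\lesssim_{k,p}\sum_{l=1}^{\max(k,2)}\|f^{(l)}\|_{L^p} = \|f'\|_{W^{\max(k-1,1),p}}$, and the final inequality $\|f'\|_{W^{\max(k-1,1),p}}\le\|f\|_{W^{\max(k,2),p}}$ is trivial. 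I expect the loss of a derivative in this top-order term to be the only genuine obstacle, and it is precisely the extra factor of $x$ in $x\,If/(1+x^2)$ — which a plain weight such as $1/(1+x^2)$ would lack — that makes the integration by parts effective.
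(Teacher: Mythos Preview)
Your proof is correct. The overall Leibniz decomposition and the identification of the top-order term $g\,(If)^{(k)}$ (with $k\ge 2$) as the only place where a derivative might be lost are exactly the same as in the paper. One small omission in the write-up: the case $j=k=1$ is not covered by your stated range $1\le j\le k-1$ nor by the ``delicate'' case $k\ge 2$, but it is of course harmless since $\|(If)'\|_{L^p}\lesssim\|f''\|_{L^p}\le\|f'\|_{W^{1,p}}$ by the same Minkowski argument; you clearly have this in mind when you say the delicate term occurs only for $k\ge 2$.

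Where you genuinely diverge from the paper is in the treatment of that top-order term. The paper does not use the Taylor-remainder representation at all: instead it observes the algebraic identity
\[
\frac{x(If)'(x)}{1+x^2}=\frac{f(x)-f(0)}{(1+x^2)x}-\frac{f'(0)}{1+x^2},
\]
bounds this directly in $W^{k-1,p}$ via the weighted Hardy inequality (Lemma~\ref{wtHardy}), and then uses two Leibniz-type comparisons (through Lemma~\ref{gIf-Hk}) to pass from $\frac{d^k}{dx^k}\bigl(\tfrac{x}{1+x^2}If\bigr)$ to $\tfrac{x}{1+x^2}(If)^{(k)}$ and from there to $\frac{d^{k-1}}{dx^{k-1}}\bigl(\tfrac{x}{1+x^2}(If)'\bigr)$. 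Your route is more direct: you stay inside the integral representation $(If)^{(k)}(x)=\int_0^1(1-t)t^{k-1}f^{(k+1)}(tx)\,dt$ and integrate by parts in $t$, producing the factor $x^{-1}$ that cancels the numerator of $g$. Your argument is self-contained and bypasses the auxiliary Lemmas~\ref{If'-Hk} and~\ref{gIf-Hk}; the paper's argument is more modular but requires that extra machinery. Both ultimately hinge on the same structural point you emphasize at the end: the extra factor of $x$ in the weight is precisely what absorbs the would-be derivative loss.
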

\begin{proof}
By Lemma \ref{gIf-Hk} (note that we have subtracted the term with all $k$ derivatives hitting $If$) and Lemma \ref{If'-Hk},
\begin{align*}
\left\| \frac{d^k}{dx^k}\frac{xIf(x)}{1 + x^2}
- 1_{k\ge1}\frac{x(If)^{(k)}(x)}{1 + x^2} \right\|_{L^p}
&\lesssim_{k,p,\gamma} \|(If)'\|_{W^{\max(k-2,0),p}}\\
&\lesssim_{k,p,\gamma} \|f'\|_{W^{\max(k-1,1),p}}.
\end{align*}
If $k = 0$ then there is nothing more to prove. If $k \ge 1$ then similarly \[
\left\| \frac{d^{k-1}}{dx^{k-1}}\frac{x(If)'(x)}{1 + x^2}
- 1_{k\ge1}\frac{x(If)^{(k)}(x)}{1 + x^2} \right\|_{L^p}
\lesssim_{k,p,\gamma} \|f'\|_{W^{\max(k-1,1),p}}.
\]
Also,
\begin{align*}
\left\| \frac{x(If)'(x)}{1 + x^2} \right\|_{W^{k-1,p}}
&= \left\| \frac{f(x) - f(0)}{(1 + x^2)x} - \frac{f'(0)}{1 + x^2} \right\|_{W^{k-1,p}}\\
&\lesssim_k \|1/(1 + x^2)\|_{C^{k-1}}\|(f(x) - f(0))/x\|_{W^{k-1,p}}\\
&+ \|1/(1 + x^2)\|_{W^{k-1,p}}|f'(0)|\\
&\lesssim_{k,p,\gamma} \|f'\|_{W^{\max(k-1,1),p}}.
\tag{by Lemma \ref{wtHardy} and $W^{1,p} \sub C^0$}\\
\end{align*}
\end{proof}

\section{Hilbert transform on H\"older functions}\label{Hilbert}

In this section we generalize the bounds for the Hilbert transform in \cite{ElJe}.

\begin{df}
For a function $f$ let $\tilde f(x) = f(x^{1/\alpha})$ ($x \ge 0$).
\end{df}

For example,
\[
\tilde{\bar W}(x) = -\frac{2\sin(\alpha\pi/2)x}{x^2 + 2\cos(\alpha\pi/2)x + 1}, \quad \widetilde{H\bar W}(x) = \frac{2(\cos(\alpha\pi/2)x + 1)}{x^2 + 2\cos(\alpha\pi/2)x + 1}.
\]

\begin{rem}
In this section we only consider functions defined on $\{x: x \ge 0\}$,
unless stated otherwise. 
\end{rem}

\begin{df}\label{Hr-df}
For $r \ge 1$ (not necessarily an integer) define
\[
H^{(r)}(f)(x)
= \frac{1}{\pi}\int_0^\infty \frac{2ry^{2r-1}}{x^{2r} - y^{2r}}f(y)dy
\]
and
\[
\tilde Hf = H^{(1/\alpha)}f
\]
so that
\[
\tilde Hf(x) = (HEf(\cdot^\alpha))(x^{1/\alpha})
\]
where $Ef(x) = f(|x|)\sgn x$ is the odd extension of $f$. Then for odd $V$,
\[
\tilde H\tilde V(x) = HEV(x^{1/\alpha}) = HV(x^{1/\alpha}) = \widetilde{HV}(x).
\]
\end{df}

Before bounding this operator, we need some elementary inequalities.

\begin{lem}\label{kernel-ineq}
If $r \ge 1$ and $t \ge 0$ then
\[
\frac{2rt^{2r-1}}{1 - t^{2r}} \le \frac{2t}{1 - t^2} \le \frac{2rt}{1 - t^{2r}} \le \frac{2rt^{2r-1}}{1 - t^{2r}} + 2r.
\]
\end{lem}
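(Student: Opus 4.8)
The plan is to reduce each of the three inequalities in the chain to an elementary polynomial fact. First note that we may assume $t>0$ and $t\ne 1$: at $t=0$ every term vanishes, and at $t=1$ all four expressions have the same simple pole (each behaves like $1/(1-t)$ there), so there is nothing to prove and this point may be excluded (in the applications it is anyway a principal-value singularity handled separately). For $t\ne 1$ the two denominators $1-t^2$ and $1-t^{2r}$ have the same sign, namely $\sgn(1-t)$, since $t^{2r}$ and $t^2$ are both $<1$ for $t<1$ and both $>1$ for $t>1$. Consequently, multiplying an inequality through by $1-t^{2r}$ (or by $1-t^2$) reverses it precisely when $t>1$, and the cases $t<1$ and $t>1$ can be run in parallel.

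For the first two inequalities, multiply through by $(1-t^{2r})/(2r)$, which is positive when $t<1$ and negative (hence reversing) when $t>1$. The outer terms become $t^{2r-1}$ and $t$, while the middle term $\frac{2t}{1-t^2}$ becomes $M:=\frac{t}{r}\cdot\frac{1-t^{2r}}{1-t^2}$. Writing $u=t^2$ and applying the mean value theorem to $x\mapsto x^r$ gives $\frac{1-u^r}{1-u}=r\xi^{r-1}$ for some $\xi$ strictly between $u$ and $1$, so $M=t\,\xi^{r-1}$ with $\xi$ between $t^2$ and $1$. Then $\frac{2rt^{2r-1}}{1-t^{2r}}\le\frac{2t}{1-t^2}$ amounts to comparing $t^{2r-1}$ with $t\,\xi^{r-1}$, i.e.\ $(t^2)^{r-1}$ with $\xi^{r-1}$; since $x\mapsto x^{r-1}$ is nondecreasing (using $r\ge1$) and, for $t<1$, $t^2<\xi$ while, for $t>1$, $t^2>\xi$, this holds in both cases, the reversal of the inequality for $t>1$ being exactly compensated by the reversal of the relation between $t^2$ and $\xi$. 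Likewise $\frac{2t}{1-t^2}\le\frac{2rt}{1-t^{2r}}$ reduces (after dividing by $t>0$) to $\xi^{r-1}\le1$ when $t<1$ and $\xi^{r-1}\ge1$ when $t>1$, which is immediate from $\xi<1$, resp.\ $\xi>1$.

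For the last inequality the two left-hand fractions share the denominator $1-t^{2r}$, so it is equivalent to $\frac{t-t^{2r-1}}{1-t^{2r}}\le1$. Clearing this denominator (again reversing when $t>1$) and rearranging, one finds that it is equivalent to $(1-t)\bigl(1+t^{2r-1}\bigr)\ge0$ when $t<1$ and to $(1-t)\bigl(1+t^{2r-1}\bigr)\le0$ when $t>1$; both hold trivially since $1+t^{2r-1}>0$.

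The argument is entirely elementary, so there is no genuine obstacle; the only points that need care are the bookkeeping of inequality directions across the sign change of $1-t^{2r}$ at $t=1$, and the observation that the mean value theorem step works for every real $r\ge1$, not only for integer $r$ (where $\frac{1-t^{2r}}{1-t^2}$ would be a finite geometric sum). I would therefore present the cases $t<1$ and $t>1$ side by side to keep that bookkeeping transparent.
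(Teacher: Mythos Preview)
Your argument is correct. The point $t=1$ is indeed a removable issue (the paper's own proof does not treat it separately either), and the bookkeeping you do for $t<1$ versus $t>1$ is exactly what is needed once one clears denominators.

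The paper takes a slightly different route. For the first two inequalities it multiplies through by $(1-t^2)(1-t^{2r})/(2t)$, which is always nonnegative, obtaining the equivalent forms $(r-1)t^{2r}+1\ge rt^{2r-2}$ and $t^{2r}+r-1\ge rt^2$, and observes that each is a direct instance of Young's (weighted AM--GM) inequality. For the last inequality it simply notes that both $t$ and $t^{2r-1}$ lie between $1$ and $t^{2r}$, so $|t-t^{2r-1}|\le|1-t^{2r}|$. Your mean-value-theorem device $\frac{1-u^r}{1-u}=r\xi^{r-1}$ and your explicit factorization $(1-t)(1+t^{2r-1})$ achieve the same ends by different means. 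The paper's version avoids splitting into the two cases $t\lessgtr1$ (since the product of the denominators is automatically nonnegative), while your MVT argument makes the role of the exponent $r-1\ge0$ perhaps more transparent. Either way the content is the same elementary monotonicity.
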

\begin{proof}
First we show the first two inequalities.
Clearing the denominator and canceling the factor $2t$ give
$rt^{2r-2} - rt^{2r} \le 1 - t^{2r} \le r - rt^2$.
The first inequality is equivalent to $(r - 1)t^{2r} + 1 \ge rt^{2r-2}$,
and the second equivalent to $t^{2r} + r - 1 \ge rt^2$.
Both follow from Young's inequality.
The last inequality is nothing but $|t - t^{2r-1}| \le |1 - t^{2r}|$,
which holds because $t$ and $t^{2r-1}$ are always between 1 and $t^{2r}$.
\end{proof}

\begin{lem}\label{Hn-L2}
For $r \ge 1$ we have $\|H^{(r)}\|_{L^2\to L^2} \le Cr$ for some constant $C$.
\end{lem}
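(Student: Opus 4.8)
The plan is to reduce the $L^2 \to L^2$ bound on $H^{(r)}$ to the classical fact that the Hilbert transform is bounded on $L^2$ (indeed an isometry up to the factor of $\pi$ in this normalization), by exhibiting $H^{(r)}$ as a change of variables conjugate of (a piece of) the Hilbert transform, together with an error term that is easy to control. Concretely, writing $x = s^{1/(2r)}$, $y = u^{1/(2r)}$ one has $y^{2r-1}dy = \frac{1}{2r}du$, so that the substitution turns $H^{(r)}f(x)$ into $\frac{1}{\pi}\int_0^\infty \frac{g(u)\,du}{s - u}$ with $g(u) = f(u^{1/(2r)})$; this is exactly the (finite) Hilbert transform on the half-line applied to $g$, which is bounded on $L^2(0,\infty)$ with a constant independent of $r$. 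The catch is that the change of variables is not an $L^2$-isometry: $\|g\|_{L^2(du)}^2 = 2r\int_0^\infty |f(x)|^2 x^{2r-1}\,dx$, which is not comparable to $\|f\|_{L^2(dx)}^2$ uniformly in $r$. So a naive conjugation loses powers of $r$ and does not give the clean linear-in-$r$ bound.

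A cleaner route, and the one I would actually carry out, is to use Lemma \ref{kernel-ineq} to compare the kernel of $H^{(r)}$ directly to that of $H^{(1)} = H$ restricted to the half-line (the odd Hilbert transform). Write the kernel as
\[
K_r(x,y) = \frac{1}{\pi}\cdot\frac{2ry^{2r-1}}{x^{2r} - y^{2r}},
\qquad
K_1(x,y) = \frac{1}{\pi}\cdot\frac{2y}{x^2 - y^2}.
\]
Setting $t = y/x$ and factoring out $1/x$, Lemma \ref{kernel-ineq} (applied with this $t$, and separately on $0 \le t \le 1$ and $t \ge 1$ after the obvious symmetry/substitution) yields a pointwise bound of the form
\[
|K_r(x,y)| \le |K_1(x,y)| + \frac{2r}{\pi}\cdot\frac{1}{x}\,1_{\{y \le x\}} + (\text{symmetric term for } y \ge x).
\]
The first term contributes $\|H^{(1)}\|_{L^2\to L^2} = 1$ (or an absolute constant in this normalization). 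The remaining terms are Hardy-type operators: $f \mapsto \frac{1}{x}\int_0^x f(y)\,dy$ and $f \mapsto \int_x^\infty \frac{f(y)}{y}\,dy$ are both bounded on $L^2(0,\infty)$ with absolute constant (the classical Hardy inequality), so they contribute $\lesssim r$. Summing, $\|H^{(r)}\|_{L^2\to L^2} \le Cr$.

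I expect the main obstacle to be bookkeeping rather than anything deep: one must be careful that Lemma \ref{kernel-ineq} as stated only covers $t \in [0,1)$ (so $y < x$), and the region $y > x$ must be handled by the substitution $t \mapsto 1/t$, which swaps the roles of the kernel term and the ``$+2r$'' remainder — there the leading term $2t/(1-t^2)$ becomes the one bounded by $\frac{2rt^{2r-1}}{1-t^{2r}} + 2r$ after rescaling, so it is the $H^{(r)}$ kernel itself plus a Hardy remainder. Getting the inequalities to line up in the correct direction in each region, and confirming that the ``$+2r$'' genuinely produces a \emph{Hardy} kernel (i.e.\ $1/x$ times an indicator, not something worse) rather than, say, a term that is not even $L^2$-bounded, is the one point that needs care. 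Once the pointwise kernel domination is in place the conclusion is immediate from $\|H\|_{L^2 \to L^2} \lesssim 1$ and the Hardy inequality.
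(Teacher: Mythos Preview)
Your overall strategy — compare $H^{(r)}$ to $H^{(1)}$ and control the difference by a Hardy-type operator — is the same as the paper's, but the execution as written has two real gaps.

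\textbf{First}, the inference ``$|K_r(x,y)| \le |K_1(x,y)| + M(x,y)$, hence $\|H^{(r)}\| \le \|H^{(1)}\| + \|T_M\|$'' is not valid: $H^{(1)}$ is a principal-value singular integral, so pointwise domination of $|K_1|$ does not translate into an operator-norm bound (the integral $\int |K_1(x,y)f(y)|\,dy$ diverges). What you actually need is a bound on $|K_r - K_1|$; this kernel \emph{is} locally integrable (the $\frac{1}{x-y}$ singularities cancel), so Schur/Hardy estimates then apply legitimately to $H^{(r)}-H^{(1)}$.

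\textbf{Second}, the $y>x$ region does not work as you sketch. With $s = x/y\in(0,1)$ one has $|K_r(x,y)| = \frac{1}{\pi y}\cdot\frac{2r}{1-s^{2r}}$, and applying Lemma~\ref{kernel-ineq} (via $\frac{2rs}{1-s^{2r}} \le \frac{2s}{1-s^2}+2r$, then dividing by $s$) produces a remainder $\frac{2r}{\pi y s} = \frac{2r}{\pi x}$, i.e.\ the kernel $\frac{1}{x}\mathbf 1_{y>x}$. The corresponding operator $f\mapsto \frac{1}{x}\int_x^\infty f$ is \emph{not} bounded on $L^2$, so this is precisely the ``something worse'' you were worried about. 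To get the dual Hardy kernel $\frac{1}{y}\mathbf 1_{y>x}$ one needs the separate elementary inequality $\frac{2r}{1-s^{2r}} - \frac{2}{1-s^2}\le 2(r-1)$ for $s\in(0,1)$, which is true but is not contained in Lemma~\ref{kernel-ineq}.

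The paper avoids both issues: it writes $\pi(H^{(r)}-H^{(1)})f(x)=\int_0^\infty k(t)f(tx)\,dt$ with $k(t)=\frac{2rt^{2r-1}}{1-t^{2r}}-\frac{2t}{1-t^2}$, and uses Minkowski together with $\|f(t\cdot)\|_{L^2}=t^{-1/2}\|f\|_{L^2}$ (equivalently, Schur's test with weight $t^{-1/2}$ on a degree $-1$ homogeneous kernel) to reduce to showing $\int_0^\infty |k(t)|\,t^{-1/2}\,dt\lesssim r$. Lemma~\ref{kernel-ineq} then gives $0\le -k(t)\le 2r$ on $[0,2]$ and a decaying bound on $[2,\infty)$, and the integral is finite. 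This bypasses the principal-value issue automatically (it is the \emph{difference} that is being bounded) and never needs the extra inequality for the $y>x$ side.
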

\begin{proof}
We have
\[
H^{(1)}f(x) = \frac{1}{\pi}\int_0^\infty \frac{2yf(y)dy}{x^2 - y^2}
= \frac{1}{\pi}\int_0^\infty \left( \frac{1}{x - y} - \frac{1}{x + y} \right)f(y)dy = HEf(x)
\]
where $Ef(x) = f(|x|)\sgn x$, so $H^{(2)}$ is an isometry on $L^2(\R^+)$.

For general $r$ we have
\[
\pi H^{(r)}f(x)
= \int_0^\infty \frac{2ry^{2r-1}}{x^{2r} - y^{2r}}f(y)dy
= \int_0^\infty \frac{2rt^{2r-1}}{1 - t^{2r}}f(tx)dt \tag{$y = tx$}
\]
so
\[
\pi(H^{(r)} - H^{(1)})f(x) = \int_0^\infty \left( \frac{2rt^{2r-1}}{1 - t^{2r}} - \frac{2t}{1 - t^2} \right)f(tx)dt.
\]
Since $\|f(tx)\|_{L^2_x} = \|f\|_{L^2}/\sqrt t$, we have
\[
\|H^{(r)}\|_{L^2\to L^2}
\le 1 + \frac{1}{\pi}\int_0^\infty \left( \frac{2t}{1 - t^2} -  \frac{2rt^{2r-1}}{1 - t^{2r}} \right)\frac{dt}{\sqrt t}.
\]
Note that the integrand is nonnegative by Lemma \ref{kernel-ineq}.
By the same lemma,
\[
\int_0^2 \left( \frac{2t}{1 - t^2} -  \frac{2rt^{2r-1}}{1 - t^{2r}} \right)\frac{dt}{\sqrt t} \le 2r\int_0^2 \frac{dt}{\sqrt t} = 4\sqrt2r.
\]
For $t \ge 2$,
\[
\frac{2t}{1 - t^2} -  \frac{2rt^{2r-1}}{1 - t^{2r}}
\le \frac{2rt^{2r-1}}{t^{2r} - 1} \le \frac{8r}{3t}
\]
so
\[
\int_2^\infty \left( \frac{2t}{1 - t^2} -  \frac{2rt^{2r-1}}{1 - t^{2r}} \right)\frac{dt}{\sqrt t} \le \int_2^\infty \frac{8rdt}{3t\sqrt t}
= \frac{8\sqrt2}{3}r
\]
and then
\[
\|H^{(r)}\|_{L^2\to L^2} \le 1 + \frac{20\sqrt2}{3\pi}r
\le \left( 1 + \frac{20\sqrt2}{3\pi} \right)r.
\]
\end{proof}

\begin{lem}\label{Hn-f''}
For $r \ge 1$, if $f(0) = 0$ then for $k = 1, 2$ and $x \ne 0$,
\[
(H^{(r)}f)^{(k)}(x) = \frac{1}{\pi}\int_0^\infty \frac{2rx^{2r-k}y^{k-1}}{x^{2r} - y^{2r}}f^{(k)}(y)dy.
\]
\end{lem}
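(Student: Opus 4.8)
The plan is to change variables so that the singularity of the kernel sits at a fixed point, differentiate there, and change back.

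Since $H^{(r)}$ acts on functions on $[0,\infty)$, I fix $x>0$ and substitute $y=tx$ in Definition \ref{Hr-df}, which turns the definition into
\[
\pi H^{(r)}f(x)=\mathrm{p.v.}\int_0^\infty\frac{2rt^{2r-1}}{1-t^{2r}}\,f(tx)\,dt .
\]
For $r\ge1$ the kernel $\kappa(t)=2rt^{2r-1}/(1-t^{2r})$ is bounded near $t=0$, is $O(1/t)$ at infinity, and near $t=1$ equals $(1-t)^{-1}$ plus a bounded smooth function; so for $f$ smooth with sufficient decay the integral converges as a principal value, and one may differentiate it in $x$ under the integral sign. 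Away from $t=1$ this is routine dominated differentiation; near $t=1$ it is the classical fact that $x\mapsto\mathrm{p.v.}\int g_x(t)/(1-t)\,dt$ is differentiable when $t\mapsto g_x(t)=f(tx)$ is smooth enough (H\"older suffices). Since each $\partial_x$ replaces $f^{(j)}(tx)$ by $t\,f^{(j+1)}(tx)$, I obtain, for $k=1,2$,
\[
\pi\,(H^{(r)}f)^{(k)}(x)=\mathrm{p.v.}\int_0^\infty\frac{2rt^{2r-1+k}}{1-t^{2r}}\,f^{(k)}(tx)\,dt .
\]

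Next I split the kernel using $t^{2r-1+k}=t^{k-1}t^{2r}=t^{k-1}(t^{2r}-1)+t^{k-1}$, so that
\[
\frac{2rt^{2r-1+k}}{1-t^{2r}}=-2r\,t^{k-1}+\frac{2r\,t^{k-1}}{1-t^{2r}},
\]
and change the variable back, $y=tx$, in each term. The second term becomes exactly $\frac1\pi\,\mathrm{p.v.}\int_0^\infty\frac{2rx^{2r-k}y^{k-1}}{x^{2r}-y^{2r}}f^{(k)}(y)\,dy$, the asserted right-hand side, and the first becomes $-\frac{2r}{\pi x^{k}}\int_0^\infty y^{k-1}f^{(k)}(y)\,dy$. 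Thus the lemma reduces to showing $\int_0^\infty y^{k-1}f^{(k)}(y)\,dy=0$. For $k=1$ this is $f(\infty)-f(0)=0$ — here the hypothesis $f(0)=0$ enters — and for $k=2$ one integration by parts gives $\int_0^\infty yf''(y)\,dy=[yf'(y)]_0^\infty-\int_0^\infty f'(y)\,dy$, where the boundary term vanishes (the factor $y$ at the origin, decay at infinity), reducing it to the case $k=1$.

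The only step that is not mere algebra is the differentiation under the principal-value sign, and that is the part I expect to need the most care; everything else is the elementary kernel identity above plus one integration by parts. I would also remark that the correction term is genuine: without the assumption $f(0)=0$ one picks up a spurious contribution proportional to $f(0)/x^{k}$, so the hypothesis cannot be dropped. Finally, the identity for a general admissible $f$ with $f(0)=0$ follows from the smooth, rapidly decaying case by a routine density argument.
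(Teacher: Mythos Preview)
Your argument is correct and proceeds by a genuinely different route from the paper's. The paper first isolates the $r=1$ case (where $H^{(1)}f=HEf$ and differentiation commutes with $H$ and the odd extension), then treats the difference $H^{(r)}-H^{(1)}$, whose kernel is \emph{nonsingular}; it converts $\partial_x$ of that kernel into $-\partial_y$ of a companion kernel via Euler's identity for homogeneous functions of degree $0$, and finally integrates by parts in $y$, using $f(0)=0$ for $k=1$ and the vanishing of the kernel at $y=0$ for $k=2$. Your approach instead freezes the singularity at $t=1$ via $y=tx$, differentiates in $x$ directly, and then uses the algebraic split $t^{2r-1+k}=t^{k-1}(t^{2r}-1)+t^{k-1}$ to produce the desired kernel plus a remainder $-\tfrac{2r}{\pi x^k}\int_0^\infty y^{k-1}f^{(k)}(y)\,dy$ that vanishes precisely because $f(0)=0$ (and $f$ decays). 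The trade-off: the paper pays the price of the $r=1$ base case and the Euler-identity step but never has to differentiate a principal-value integral, since the subtraction removes the singularity; you avoid both of those detours with a cleaner one-line kernel identity, but must justify differentiating through the p.v.\ at $t=1$ and then appeal to density. Both routes make transparent why $f(0)=0$ is needed and why $f'(0)=0$ is not.
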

\begin{proof}
Using $H^{(1)}f = HEf$, where $Ef(x) = f(|x|)\sgn x$, we see that the identity holds for $r = 1$.

For $r > 1$ we have (note the singularity of the kernel has been subtracted)
\[
\pi((H^{(r)} - H^{(1)})f)'(x) = \int_0^\infty \partial_x\left( \frac{2ry^{2r-1}}{x^{2r} - y^{2r}} - \frac{2y}{x^2 - y^2} \right)f(y)dy.
\]
By Euler's theorem on homogeneous functions ($xF_x + yF_y = 0$ for $F$ homogeneous of degree 0) applied to the starred equality,
\begin{align*}
\partial_x\frac{y^{2r-1}}{x^{2r} - y^{2r}}
&= \frac{1}{y}\partial_x\frac{y^{2r}}{x^{2r} - y^{2r}}
\overset{*}{=} -\frac{1}{x}\partial_y\frac{y^{2r}}{x^{2r} - y^{2r}}
= -\frac{1}{x}\partial_y\frac{x^{2r}}{x^{2r} - y^{2r}}\\
&= -\partial_y\frac{x^{2r-1}}{x^{2r} - y^{2r}}
\end{align*}
so
\[
\pi((H^{(r)} - H^{(1)})f)'(x) = -\int_0^\infty \partial_y\left( \frac{2rx^{2r-1}}{x^{2r} - y^{2r}} - \frac{2x}{x^2 - y^2} \right)f(y)dy.
\]
Since $f(0) = 0$ we can integrate by parts to get the identity for $k = 1$.

Similarly,
\[
\partial_x\frac{x^{2r-1}}{x^{2r} - y^{2r}}
= \frac{1}{y}\partial_x\frac{x^{2r-1}y}{x^{2r} - y^{2r}}
= -\frac{1}{x}\partial_y\frac{x^{2r-1}y}{x^{2r} - y^{2r}}
= -\partial_y\frac{x^{2r-2}y}{x^{2r} - y^{2r}}
\]
so
\[
\pi((H^{(r)} - H^{(1)})f)''(x) = -\int_0^\infty \partial_y\left( \frac{2rx^{2r-2}y}{x^{2r} - y^{2r}} - \frac{2y}{x^2 - y^2} \right)f'(y)dy.
\]
Integrating by parts we get the identity for $k = 2$. This time we don't need $f'(0) = 0$ because the parenthesis vanishes at $y = 0$.
\end{proof}

\begin{lem}\label{Hn-f''-L2}
For $r \ge 1$, if $f(0) = 0$ then $\|(H^{(r)}f)''\|_{L^2} \le Cr\|f''\|_{L^2}$ for some constant $C$.
\end{lem}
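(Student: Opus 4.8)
The plan is to begin from the case $k=2$ of Lemma \ref{Hn-f''}: since $f(0)=0$,
\[
(H^{(r)}f)''(x) = \frac{1}{\pi}\int_0^\infty \frac{2rx^{2r-2}y}{x^{2r}-y^{2r}}\,f''(y)\,dy = \frac{1}{\pi}\int_0^\infty \frac{2rt}{1-t^{2r}}\,f''(tx)\,dt,
\]
the second expression coming from the substitution $y=tx$ (a principal value near $t=1$). The $r=1$ instance of this identity is $(H^{(1)}f)''=H^{(1)}(f'')$, which is bounded on $L^2(\R^+)$ by Lemma \ref{Hn-L2}. Subtracting it, it remains to bound on $L^2(\R^+)$, with operator norm $\lesssim r$, the remainder operator
\[
T_r g(x) := \int_0^\infty D_r(t)\,g(tx)\,dt, \qquad D_r(t):=\frac{2rt}{1-t^{2r}}-\frac{2t}{1-t^2};
\]
then $(H^{(r)}f)''=H^{(1)}(f'')+\tfrac1\pi T_r(f'')$ gives the claim.

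The key point is that the principal-value singularities at $t=1$ cancel in $D_r$, so $D_r$ extends to a bounded, nonnegative, integrable function on $(0,\infty)$, and $T_r$ can be estimated, exactly as in the proof of Lemma \ref{Hn-L2}, by Minkowski's integral inequality and the scaling $\|g(t\cdot)\|_{L^2}=t^{-1/2}\|g\|_{L^2}$, giving $\|T_r\|_{L^2\to L^2}\le \int_0^\infty D_r(t)\,t^{-1/2}\,dt$. Lemma \ref{kernel-ineq} then supplies everything: its second inequality gives $D_r\ge0$; its first inequality gives
\[
D_r(t)\le \frac{2rt}{1-t^{2r}}-\frac{2rt^{2r-1}}{1-t^{2r}}=\frac{2rt(1-t^{2r-2})}{1-t^{2r}}=:B_r(t);
\]
its last inequality gives $B_r(t)\le 2r$ for all $t>0$; and for $t\ge2$ one improves this to $B_r(t)=\frac{2rt(t^{2r-2}-1)}{t^{2r}-1}\le \frac{2rt^{2r-1}}{t^{2r}-1}\le \frac{8r}{3t}$, using $t^{2r}-1\ge \tfrac34 t^{2r}$ for $t\ge2$, $r\ge1$. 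Splitting the integral at $t=2$ and using $B_r\le 2r$ on $(0,2)$ and $B_r\le\tfrac{8r}{3t}$ on $(2,\infty)$,
\[
\int_0^\infty D_r(t)\,t^{-1/2}\,dt \le \int_0^2 2r\,t^{-1/2}\,dt + \int_2^\infty \frac{8r}{3}\,t^{-3/2}\,dt = \tfrac{20\sqrt2}{3}\,r,
\]
whence $\|(H^{(r)}f)''\|_{L^2}\lesssim r\,\|f''\|_{L^2}$.

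I expect no genuine obstacle: once the reduction is in place, the argument is a line-by-line parallel of Lemma \ref{Hn-L2} and rests entirely on the elementary pointwise estimates of Lemma \ref{kernel-ineq}. The only steps needing care are (i) invoking the $k=2$ formula of Lemma \ref{Hn-f''}, valid under the single hypothesis $f(0)=0$ with no condition on $f'(0)$; (ii) noting that after subtracting the $r=1$ term the kernel $D_r$ is no longer a principal value, so Minkowski's inequality applies directly; and (iii) checking that $\int_0^\infty D_r(t)\,t^{-1/2}\,dt$ converges with a bound linear in $r$, for which the uniform bound $D_r\le B_r\le 2r$ handles $t$ near $0$ and the decay $B_r(t)\le \tfrac{8r}{3t}$ handles $t$ near $\infty$.
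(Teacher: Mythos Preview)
Your proof is correct and follows essentially the same route as the paper: invoke the $k=2$ formula of Lemma~\ref{Hn-f''}, subtract the $r=1$ case, and bound the resulting nonnegative kernel $D_r(t)=\frac{2rt}{1-t^{2r}}-\frac{2t}{1-t^2}$ in $L^1(t^{-1/2}\,dt)$ by splitting at $t=2$ and using Lemma~\ref{kernel-ineq}. The only cosmetic difference is the tail estimate: the paper observes directly that for $t\ge 2$ the term $\frac{2rt}{1-t^{2r}}$ is negative, so $D_r(t)\le \frac{2t}{t^2-1}\le \frac{8}{3t}$ with no factor of $r$, whereas you route through $B_r$ and pick up an extra harmless factor of $r$; either way the final bound is linear in $r$.
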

\begin{proof}
Since $f(0) = 0$, taking the second derivative commutes with $E$ and $H$.
Then $(H^{(1)}f)'' = (HEf)'' = HEf'' = H^{(1)}(f'')$, so $\|(H^{(1)}f)''\|_{L^2} = \|H^{(1)}(f'')\|_{L^2} = \|f''\|_{L^2}$.

For $r > 1$, we change variable as before to get
\[
\|(H^{(r)}f)''\|_{L^2}
\le \left( 1 + \frac{1}{\pi}\int_0^\infty \left( \frac{2rt}{1 - t^{2r}} - \frac{2t}{1 - t^2} \right)\frac{dt}{\sqrt t} \right)\|f\|_{H^2}.
\]
Note that the integrand is nonnegative by Lemma \ref{kernel-ineq}.
By the same lemma,
\[
\int_0^2 \left( \frac{2rt}{1 - t^{2r}} - \frac{2t}{1 - t^2} \right)\frac{dt}{\sqrt t} \le 2r\int_0^2 \frac{dt}{\sqrt t} = 4\sqrt2r.
\]
For $t \ge 2$,
\[
\frac{2rt}{1 - t^{2r}} - \frac{2t}{1 - t^2} \le \frac{2t}{t^2 - 1} \le \frac{8}{3t}
\]
so
\[
\int_2^\infty \left( \frac{2rt}{1 - t^{2r}} - \frac{2t}{1 - t^2} \right)\frac{dt}{\sqrt t} \le \int_2^\infty \frac{8dt}{3t\sqrt t} = \frac{8\sqrt2}{3}
\]
and then
\[
\|(H^{(r)}f)''\|_{L^2} \le \left( 1 + \frac{1}{\pi}\left( 4\sqrt2r + \frac{8\sqrt2}{3} \right) \right)\|f''\|_{L^2} \le \left( 1 + \frac{20\sqrt2}{3\pi} \right)r\|f\|_{H^2}.
\]
\end{proof}

By Lemma \ref{Hn-L2} and Lemma \ref{Hn-f''-L2} we get
\begin{lem}\label{Hn-H2}
For $r \ge 1$ we have $\|H^{(r)}\|_{H^2_0\to H^2} \le Cr$ for some constant $C$, where $H^2_0$ denotes the space of $H^2$ functions vanishing at 0.
\end{lem}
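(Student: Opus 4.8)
The plan is to bound the three pieces of the $H^2$ norm of $g:=H^{(r)}f$ one at a time. Writing $\|g\|_{H^2}^2=\|g\|_{L^2}^2+\|g'\|_{L^2}^2+\|g''\|_{L^2}^2$, the zeroth-order term is handled by Lemma \ref{Hn-L2}, which gives $\|g\|_{L^2}\le Cr\|f\|_{L^2}$, and the second-order term by Lemma \ref{Hn-f''-L2}, which — using the vanishing $f(0)=0$ built into $H^2_0$ — gives $\|g''\|_{L^2}\le Cr\|f''\|_{L^2}$. Everything then reduces to controlling the first-order term $\|g'\|_{L^2}$ by the other two.

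For that I would use the elementary fact that on the half-line $u\mapsto\|u\|_{L^2(\R^+)}+\|u''\|_{L^2(\R^+)}$ is an equivalent norm on $H^2(\R^+)$; in particular $\|u'\|_{L^2(\R^+)}\lesssim\|u\|_{L^2(\R^+)}+\|u''\|_{L^2(\R^+)}$ with no boundary condition whatsoever. This follows by covering $\R^+$ with the unit intervals $[n,n+1]$ ($n\ge0$), applying on each the bounded-interval interpolation inequality $\|u'\|_{L^2[n,n+1]}^2\lesssim\|u\|_{L^2[n,n+1]}^2+\|u''\|_{L^2[n,n+1]}^2$ (standard on a fixed interval, e.g.\ by a compactness argument, with a constant independent of $n$ by translation), and summing in $n$. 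Lemma \ref{Hn-f''} exhibits $g''$ pointwise (for $x\ne0$) as an explicit integral which Lemma \ref{Hn-f''-L2} places in $L^2(\R^+)$, so $g,g''\in L^2(\R^+)$, and the above upgrades this to $g\in H^2(\R^+)$ with $\|g'\|_{L^2}\lesssim\|g\|_{L^2}+\|g''\|_{L^2}$.

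Combining the three bounds,
\[
\|H^{(r)}f\|_{H^2}^2\;\lesssim\;\|g\|_{L^2}^2+\|g''\|_{L^2}^2\;\lesssim\;r^2\bigl(\|f\|_{L^2}^2+\|f''\|_{L^2}^2\bigr)\;\le\;r^2\|f\|_{H^2}^2,
\]
so $\|H^{(r)}\|_{H^2_0\to H^2}\le Cr$ for a suitable $C$, as claimed.

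I do not expect a genuine obstacle here: the substance is in Lemmas \ref{Hn-L2} and \ref{Hn-f''-L2}, and the present step is only the remark that an $L^2$ bound on a function together with its second derivative controls the full $H^2(\R^+)$ norm. The one point worth emphasizing is that one must \emph{not} try to get $\|g'\|_{L^2}^2\le\|g\|_{L^2}\|g''\|_{L^2}$ by integration by parts while claiming the boundary term $g(0)g'(0)$ vanishes — it need not: for the explicit profile (the case $r=1/\alpha$, $f=\tilde{\bar W}$) one has $H^{(r)}f=\widetilde{H\bar W}$ with $\widetilde{H\bar W}(0)=2$ and $(\widetilde{H\bar W})'(0)=-2\cos(\alpha\pi/2)\ne0$. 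The boundary-condition-free interpolation above sidesteps this issue entirely.
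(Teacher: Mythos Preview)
Your argument is correct and is exactly the approach the paper takes: the paper's proof is the single line ``By Lemma~\ref{Hn-L2} and Lemma~\ref{Hn-f''-L2} we get,'' and you have simply spelled out the implicit interpolation step for $\|g'\|_{L^2}$ (together with a helpful warning about why the naive integration-by-parts route fails at $x=0$).
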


\begin{rem}
To bound higher derivatives of $H^{(r)}f$ in $L^2$, more derivatives of $f$ at 0 need to vanish.
\end{rem}

\section{H\"older steady states for nonzero $a$}\label{Holder}

We first define the spaces which we are going to work with.

\begin{df}
Let $\bar H^2 = \{f \in H^2: xf_x \in H^2\}$ with $\|f\|_{\bar H^2} = \|f\|_{H^2} + \|xf_x\|_{H^2}$. We subscript a space by 0 to indicate the subspace of functions that vanish at 0. For example, $\bar H^2_0 = \{f \in \bar H^2: f(0) = 0\}$.
\end{df}

By Lemm 2.2 of \cite{ElJe}, for $0 < \alpha \le 1$ and $f \in H^2_0$ we have $(\tilde Hf)_x(0) = f_x(0)\cot(\alpha\pi/2)$.

\begin{df}
Let $X = \{V \in \bar H^2_0: V_x(0) = 0\}$ and $Y = \{V \in H_0^2: V_x(0) + 2\sin(\alpha\pi/2)\tilde HV(0) = 0\}$.
\end{df}

We will solve the steady-state equation $F = 0$ (see Section \ref{Method})
using the implicit function theorem, so we need to find its differential.
\[
dF_{(\bar W,0,0)}(V, \mu, 0)
= LV + \mu x\tilde{\bar W}_x
\]
where
\[
LV = V + xV_x - \tilde{\bar W}\tilde HV - V\tilde H\tilde{\bar W}.
\]

\begin{lem}\label{Lt-iso}
If $0 < \alpha \le 1$, then $L$ is an isomorphism from $X$ to $Y$ and
\begin{align*}
L^{-1}f(x)
&= \frac{x(1 - x^2)\sin(\alpha\pi/2)}{(1 + 2x\cos(\alpha\pi/2) + x^2)^2}\\
&\times \int_0^x \frac{1 - y^2}{y}\sin\frac{\alpha\pi}{2}g(y)dy
+ \left( \frac{1 + y^2}{y}\cos\frac{\alpha\pi}{2} + 2 \right)h(y)dy\\
&- \frac{x((1 + x^2)\cos(\alpha\pi/2) + 2x)}{(1 + 2x\cos(\alpha\pi/2) + x^2)^2}\\
&\times \int_0^x -\left( \frac{1 + y^2}{y}\cos\frac{\alpha\pi}{2} + 2 \right)g(y)
+ \frac{1 - y^2}{y}\sin\frac{\alpha\pi}{2}h(y)dy
\end{align*}
where
\[
g(x) = \frac{f(x)}{x} - \frac{f'(0)}{1 + 2x\cos(\alpha\pi/2) + x^2},\\
\]
and
\begin{align*}
h(x) &= \frac{\tilde Hf(x)}{x} - \frac{(1 - x^2)\tilde Hf(0)}{x(1 + 2x\cos(\alpha\pi/2) + x^2)}\\
&= \frac{\tilde Hf(x) - \tilde Hf(0)}{x} + \frac{2(\cos(\alpha\pi/2) + x)}{1 + 2x\cos(\alpha\pi/2) + x^2}\tilde Hf(0)\\
&= \frac{\tilde Hf(x) - \tilde Hf(0) - x(\tilde Hf)'(0)}{x} - \frac{2x(\cos\alpha\pi + x\cos(\alpha\pi/2))}{1 + 2x\cos(\alpha\pi/2) + x^2}\tilde Hf(0).
\end{align*}
The norm of $L^{-1}$ is bounded, uniformly in $\alpha$.
\end{lem}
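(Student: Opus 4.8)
Write $c=\cos(\alpha\pi/2)$, $s=\sin(\alpha\pi/2)$, $\zeta=e^{i\alpha\pi/2}=c+is$ and $D(x)=x^2+2cx+1=|x+\zeta|^2$. The plan is to verify that the displayed formula is a bounded two‑sided inverse of $L$; the structure of the formula is best seen via a complexification, which I would use to derive it and which also organizes the verification. From the explicit expressions for $\tilde{\bar W}$ and $\widetilde{H\bar W}$ one gets the algebraic identities
\[
\mathcal{W}:=\tilde{\bar W}+i\,\widetilde{H\bar W}=\frac{2i\zeta}{x+\zeta},\qquad 1+i\mathcal{W}=\frac{x-\zeta}{x+\zeta}.
\]
For real $V$ set $\mathcal{V}=V+i\tilde HV$. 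The representation $\pi H^{(r)}g(x)=\int_0^\infty\frac{2rt^{2r-1}}{1-t^{2r}}g(tx)\,dt$ from the proof of Lemma \ref{Hn-L2} shows that $\tilde H=H^{(1/\alpha)}$ commutes with the dilation operator $x\de_x$, so $x\mathcal{V}_x=xV_x+i\tilde H(xV_x)$, and using $\tilde{\bar W}=\Re\mathcal{W}$, $\widetilde{H\bar W}=\Im\mathcal{W}$ one checks $LV=\Re(\mathcal{L}\mathcal{V})$, where $\mathcal{L}\mathcal{V}:=x\mathcal{V}_x+(1+i\mathcal{W})\mathcal{V}=x\mathcal{V}_x+\tfrac{x-\zeta}{x+\zeta}\mathcal{V}$. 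The scalar ODE $\mathcal{L}\mathcal{V}=\mathcal{F}$ has integrating factor $\mu(x)=(x+\zeta)^2/x$, so $(\mu\mathcal{V})_x=(x+\zeta)^2\mathcal{F}/x^2$; subtracting from $\mathcal{F}$ — along $\ker\mathcal{L}=\C\cdot x/(x+\zeta)^2$ — the contributions that make the right side non‑integrable at $0$, and taking $\mathcal{F}=f+i\tilde Hf$, produces $\mathcal{V}=\frac{x}{(x+\zeta)^2}\int_0^x(\cdots)\,dy$ whose real part, after separating with $\zeta=c+is$ and $|x+\zeta|^2=D$, is the stated $L^{-1}f$; the two outer prefactors are $-\tfrac12 x\tilde{\bar W}_x$ and $\tfrac12 x(\widetilde{H\bar W})_x$, and $g,h$ are the renormalized real and imaginary parts of the integrand.

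The verification then splits into three parts. First, $L^{-1}f\in X$: one has $g(0)=0$ since $f(0)=0$ makes $f(x)/x\to f'(0)$ while $f'(0)/D(x)\to f'(0)/D(0)=f'(0)$, and $g\to0$ at $\infty$ automatically; and $h(0)=h(\infty)=0$ \emph{precisely} because $f\in Y$, i.e.\ $\tilde Hf(0)=-f'(0)/(2s)$, combined with $(\tilde Hf)'(0)=f'(0)\cot(\alpha\pi/2)$ (Lemma 2.2 of \cite{ElJe}). Hence both integrands are bounded and decaying, each prefactor is $O(x)$ near $0$, so $L^{-1}f=O(x^2)$ there and $(L^{-1}f)(0)=(L^{-1}f)_x(0)=0$. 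For the $\bar H^2$ bound I would use the twice‑renormalized forms of $g,h$ in the statement: Lemma \ref{wtHardy} controls $(f-f(0)-xf'(0))/x$ and $(\tilde Hf-\tilde Hf(0)-x(\tilde Hf)'(0))/x$ in $H^1$ by $f''$ and $(\tilde Hf)''$; Lemma \ref{Hn-H2} controls $\tilde Hf$ in $H^2$ by $f$; and Lemmas \ref{gIf-Hk}--\ref{If-Hk} control the products (smooth, bounded, non‑vanishing prefactor)$\,\times\int_0^x(\cdots)$ in $H^2$ and, after one $x\de_x$, in $\bar H^2$. Second, $L$ maps $X$ into $Y$, by a direct check of the compatibility condition on $LV$. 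Third, $L|_X$ is injective: by the complexification $V\in\ker L$ forces $\mathcal{V}=V+i\tilde HV\in\ker\mathcal{L}$ with $\tilde HV=\Im\mathcal{V}$, and since $\ker\mathcal{L}$ is spanned by $x\mathcal{W}_x=x\tilde{\bar W}_x+i\tilde H(x\tilde{\bar W}_x)$ (the scaling direction), that conjugacy constraint forces $V$ to be a real multiple of $x\tilde{\bar W}_x$; but $(x\tilde{\bar W}_x)_x(0)=\tilde{\bar W}_x(0)=-2s\neq0$, so $\ker L\cap X=\{0\}$. It remains to verify $L(L^{-1}f)=f$, equivalently $\Im\mathcal{V}=\tilde H(\Re\mathcal{V})$ for the integrating‑factor $\mathcal{V}$; granting this, $L(\Re\mathcal{V})=\Re(\mathcal{L}\mathcal{V})=\Re(f+i\tilde Hf)=f$, and together with injectivity $L\colon X\to Y$ is a bijection with the stated inverse.

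For the uniform bound, the Hardy constants in Lemmas \ref{wtHardy}--\ref{If-Hk} depend only on $p=2$, on $k\le2$, and on fixed exponents, hence not on $\alpha$; the one $\alpha$‑singular ingredient is $\|\tilde H\|_{H^2_0\to H^2}\ls1/\alpha$ from Lemma \ref{Hn-H2}. But $\tilde H$ enters $L$ only through $\tilde{\bar W}=-2sx/D$, and enters $L^{-1}$ only through $h$, which in each of the two integral terms appears multiplied either by the outer prefactor $x(1-x^2)s/D^2$ or by the inner factor $\tfrac{1-y^2}{y}s$; in every case an explicit factor $s=\sin(\alpha\pi/2)=O(\alpha)$ offsets the $O(1/\alpha)$, so $\|L^{-1}\|_{Y\to X}\ls1$ uniformly for $\alpha\in(0,1]$.

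I expect the main obstacle to be the nonlocal identity $L(L^{-1}f)=f$ — equivalently, showing that the solution $\mathcal{V}$ of the complexified ODE is a genuine $\tilde H$‑conjugate pair and not merely a complex function with the correct real part. One cannot shortcut this by $\tilde H^2=-\mathrm{id}$, which fails for $H^{(1/\alpha)}$ because of the odd/even extension inside the operator. Either one proves the conjugacy directly, or one avoids the ODE and checks $L(L^{-1}f)=f$ head‑on by differentiating the explicit formula and reducing the only hard term, $\tilde{\bar W}\,\tilde H(L^{-1}f)$, via a Cotlar/Tricomi‑type product identity for $\tilde H$ (generalizing the one implicit in \cite{ElJe} for $1/\alpha\in\Z$). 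Everything else — the Hardy estimates of Section \ref{Hardy}, the vanishing of $g,h$ at the endpoints, and the bookkeeping of the $\sin(\alpha\pi/2)$ factors — is routine once this identity is in hand.
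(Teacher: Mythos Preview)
Your proposal shares the paper's decisive observation---uniformity in $\alpha$ holds because every appearance of $\tilde H$ in $L^{-1}$ (via $h$) carries an explicit factor $\sin(\alpha\pi/2)=O(\alpha)$ that offsets the $O(1/\alpha)$ cost of $\tilde H$ from Lemma~\ref{Hn-H2}---but the execution differs. The paper neither re-derives the formula nor verifies $L(L^{-1}f)=f$; it simply cites Section~4.2 of \cite{ElJe} for the formal expression. Your complexification $\mathcal W=\tfrac{2i\zeta}{x+\zeta}$, $\mathcal L\mathcal V=x\mathcal V_x+\tfrac{x-\zeta}{x+\zeta}\mathcal V$ is a cleaner conceptual derivation than what appears in either paper, and you are right that the conjugacy $\Im\mathcal V=\tilde H(\Re\mathcal V)$ is the genuinely nonlocal step; the paper does not supply it either.

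For the $\bar H^2$ bound, however, the paper does not apply the Hardy lemmas directly to the full formula as you propose. Instead, in the two unnamed lemmas immediately preceding this proof, it isolates six building-block operators $T_{n,\alpha}f=\tfrac{x}{1+x^2}\int_0^x y^n g$ and $S_{n,\alpha}f=\tfrac{x^{1-\max(n,0)}}{1+x^2}\int_0^x y^n h$ for $n=0,\pm1$, bounds each from $Y$ to $\bar H^2_0$ (with $\|T_{n,\alpha}\|\lesssim1$, $\|S_{n,\alpha}\|\lesssim1/\alpha$), and then expresses $L^{-1}f$ as a linear combination of them. In doing so it records one cancellation your sketch omits: the two pieces with prefactor $x^3/D^2$ and integrand proportional to $yh(y)$---one coming from $(-x^2)\cdot\tfrac{xs}{D^2}$ paired with $\tfrac{y^2}{y}c\,h$ in the first group, the other from $x^2c\cdot\tfrac{x}{D^2}$ paired with $-\tfrac{y^2}{y}s\,h$ in the second---annihilate each other exactly. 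This is not cosmetic: since $yh(y)\to\tilde Hf(0)$ as $y\to\infty$, the quantity $\tfrac{x^3}{D^2}\int_0^x yh(y)\,dy$ tends to a nonzero constant and is not in $L^2$ on its own. If you keep the formula intact the cancellation is automatic, but a naive term-by-term application of Lemmas~\ref{gIf-Hk}--\ref{If-Hk}, as your sketch suggests, will fail on precisely this piece; you must group the two before estimating.
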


This will be proved towards the end of the section.

\begin{lem}
For $0 < \alpha \le 1$ and $n = 0, \pm1$,
\begin{align*}
T_{n,\alpha}f &= \frac{x}{1 + x^2}\int_0^x y^n\left( \frac{f(y)}{y} - \frac{f'(0)}{1 + 2y\cos(\alpha\pi/2) + y^2} \right)dy,\\
S_{n,\alpha}f &= \frac{x^{1-\max(n,0)}}{1 + x^2}\int_0^x y^n\left( \frac{\tilde Hf(y) - \tilde Hf(0)}{y} - \frac{2(\cos(\alpha\pi/2) + y)\tilde Hf(0)}{1 + 2y\cos(\alpha\pi/2) + y^2} \right)dy
\end{align*}
are bounded from $Y$ to $H^2$, with $\|T_{n,\alpha}\|_{Y\to H^2} \le C$ and
$\|S_{n,\alpha}\|_{Y\to H^2} \le C/\alpha$.
\end{lem}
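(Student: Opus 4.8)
The plan is to reduce all six operators ($n\in\{-1,0,1\}$ for each of $T$, $S$) to the Hardy-type estimates of Section \ref{Hardy} by rewriting the inner expression of each as $\Theta(y)/y$ for a function $\Theta$ that vanishes to \emph{second} order at the origin. Throughout write $\b = \alpha\pi/2\in(0,\pi/2]$; since $\cos\b\ge0$, one has $1 + 2y\cos\b + y^2 \asymp 1 + y^2$ uniformly in $\b$, so the finitely many rational and model functions appearing below have $W^{k,2}$ norms bounded independently of $\alpha$. I record two standing reductions: first, $Y\subset H^2_0$ and $H^2\hookrightarrow C^1$, so $|f'(0)|\lesssim\|f\|_{H^2}$; second, $\tilde H = H^{(1/\alpha)}$ by Definition \ref{Hr-df}, so Lemma \ref{Hn-H2} with $r = 1/\alpha$ gives $\|\tilde Hf\|_{H^2}\le C\alpha^{-1}\|f\|_{H^2}$, hence $|\tilde Hf(0)| + \|(\tilde Hf)'\|_{H^1}\lesssim\alpha^{-1}\|f\|_{H^2}$. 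The $T$-estimates will involve only $f$ and $f'(0)$, which is why they carry no factor $\alpha^{-1}$.

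For $T_{n,\alpha}$ I take
\[
\Theta = \chi := f(y) - \frac{f'(0)\,y}{1 + 2y\cos\b + y^2},
\]
so the inner expression is exactly $\chi(y)/y$; since $f(0) = 0$ and $y/(1+2y\cos\b+y^2) = y + O(y^2)$ near $0$, we get $\chi(0) = \chi'(0) = 0$, and $\chi\in H^2$ with $\|\chi\|_{H^2}\lesssim\|f\|_{H^2}$ uniformly in $\b$. For $S_{n,\alpha}$, the identity $\frac{2(\cos\b + y)}{1 + 2y\cos\b + y^2} = \frac{2 - \widetilde{H\bar W}(y)}{y}$ lets one rewrite the inner expression as $\psi(y)/y$ with
\[
\Theta = \psi := \tilde Hf(y) + \tilde Hf(0)\bigl(1 - \widetilde{H\bar W}(y)\bigr).
\]
Here $\psi(0) = 0$ because $\widetilde{H\bar W}(0) = 2$, and
\[
\psi'(0) = (\tilde Hf)'(0) - \tilde Hf(0)\,(\widetilde{H\bar W})'(0) = f'(0)\cot\b + 2\cos\b\,\tilde Hf(0) = 0,
\]
using $(\tilde Hf)'(0) = f'(0)\cot\b$ from \cite{ElJe}, $(\widetilde{H\bar W})'(0) = -2\cos\b$, and — crucially — the relation $f'(0) + 2\sin\b\,\tilde Hf(0) = 0$ defining $Y$. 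Finally $\psi = \bigl(\tilde Hf - \tilde Hf(0)\,\widetilde{H\bar W}\bigr) + \tilde Hf(0)$, whose first summand lies in $H^2$, and $\|\tilde Hf - \tilde Hf(0)\,\widetilde{H\bar W}\|_{H^2} + \|\psi'\|_{W^{1,2}}\lesssim\alpha^{-1}\|f\|_{H^2}$.

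Given $\Theta\in\{\chi,\psi\}$ with $\Theta(0) = \Theta'(0) = 0$ — so that $\Theta(y)/y^2 = (I\Theta)'(y)$ — the three exponents go through uniformly. For $n = -1$ the integrand is $y^{-1}\cdot\Theta(y)/y = (I\Theta)'(y)$, its primitive is $I\Theta(x)$, and the operator equals $x\,I\Theta(x)/(1+x^2)$, which Lemma \ref{If-Hk} (with $k = p = 2$) bounds in $H^2$ by $\|\Theta'\|_{W^{1,2}}$. For $n = 0$ the integrand is $\Theta(y)/y$, which lies in $W^{1,2}$ with norm $\lesssim\|\Theta'\|_{W^{1,2}}$ by Lemma \ref{wtHardy} applied to $\Theta$ (orders $1$ and $2$) and to $\Theta'$ (order $1$); then Lemma \ref{gIf-Hk} with the weight $g(x) = x/(1+x^2)$ — which satisfies $\sup_x|x\,g^{(l)}(x)| < \infty$ for $l\le 2$ — bounds $\frac{x}{1+x^2}\int_0^x\Theta(y)/y\,dy$ in $H^2$. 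For $n = 1$ the integrand is $\Theta(y)$: if $\Theta = \chi\in H^2$, Lemma \ref{gIf-Hk} with the same weight gives the bound at once; if $\Theta = \psi$, split $\psi = (\psi - \tilde Hf(0)) + \tilde Hf(0)$, apply Lemma \ref{gIf-Hk} with weight $1/(1+x^2)$ to the $H^2$-summand, and note that the constant contributes $\tilde Hf(0)\,x/(1+x^2)\in H^2$. Threading the two standing reductions through these inequalities yields $\|T_{n,\alpha}\|_{Y\to H^2}\le C$ and $\|S_{n,\alpha}\|_{Y\to H^2}\le C\alpha^{-1}$.

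I expect the real content to be the second-order vanishing of $\Theta$ at $0$, in particular the identity $\psi'(0) = 0$: it is the constraint $f'(0) = -2\sin\b\,\tilde Hf(0)$ built into $Y$ — combined with $(\tilde Hf)'(0) = f'(0)\cot\b$ and the first-order data of the model profile $\widetilde{H\bar W}$ — that annihilates the first-order term of $\psi$. Without it the $n = -1$ integrand would fail to be integrable at $0$, so $S_{-1,\alpha}$ would not even be well defined; and it is the same cancellation that keeps logarithmically growing primitives from appearing in $\int_0^x$ and so makes the plain weights $x/(1+x^2)$, $1/(1+x^2)$ suffice. Everything past that identity is routine use of Lemmas \ref{wtHardy}, \ref{gIf-Hk}, \ref{If-Hk}, \ref{Hn-H2}; the only remaining chore is checking that $y/(1+2y\cos\b+y^2)$ and $\widetilde{H\bar W}$, together with their first two derivatives, are in $L^2$ uniformly in $\b$, which is immediate from $1+2y\cos\b+y^2\asymp 1+y^2$.
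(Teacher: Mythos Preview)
Your argument is correct and takes a cleaner, more unified route than the paper's. The paper treats $T_{n,\alpha}$ by comparing to the case $\alpha=1$ already handled in \cite{ElJe} and bounding the explicit difference, and then treats each $S_{n,\alpha}$ case by case: for $n=-1$ it passes to the third rewriting of $h$ in Lemma~\ref{Lt-iso} (so that Lemma~\ref{If-Hk} applies to the $I\tilde Hf$ piece) and computes the antiderivative of the rational remainder by hand; for $n=0,1$ it splits off $(\tilde Hf-\tilde Hf(0))/y$ and again evaluates the remaining primitives explicitly (log and arctan). You instead package both families uniformly as $\Theta/y$ with $\Theta(0)=\Theta'(0)=0$ and let Lemmas~\ref{wtHardy}, \ref{gIf-Hk}, \ref{If-Hk} handle all three exponents at once. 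This buys two things: the argument is self-contained (no appeal to \cite{ElJe} for $T_{n,1}$), and the role of the $Y$ constraint is made transparent --- in the paper that constraint enters only implicitly, through the equivalence of the second and third displayed forms of $h$ in Lemma~\ref{Lt-iso}, which the paper's proof of the $S_{-1,\alpha}$ case uses without comment. One small remark: your $\psi$ corresponds to the integrand of $h$ in Lemma~\ref{Lt-iso} (the ``$+$'' sign), not to the literal ``$-$'' printed in the statement of $S_{n,\alpha}$; since it is the $h$-version that feeds into $L^{-1}$ and for which $S_{-1,\alpha}$ is even well defined, yours is the right choice.
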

\begin{proof}
Clearly $T_{n,\alpha}f(0) = S_{n,\alpha}f(0) = 0$, so it remains to bound the $H^2$ norm.

For $T_{n,\alpha}$ we have
\begin{align*}
(T_{n,1} - T_{n,\alpha})f(x)
&= \frac{xf'(0)}{1 + x^2}\int_0^x \frac{2(1 - \cos(\alpha\pi/2))y^{n+1}}
{(1 + y^2)(1 + 2y\cos(\alpha\pi/2) + y^2)}dy\\
&= \frac{xf'(0)}{1 + x^2}g_{n,\alpha}(x)
\end{align*}
where all higher derivatives of $g_{n,\alpha}$ are bounded,
uniformly in $\alpha \in [0, 1]$. $T_{n,1}$ has the desired bound,
as shown in (4.15) of \cite{ElJe}, so does $T_{n,\alpha}$ because $x/(1 + x^2) \in H^\infty$ and $|f'(0)| \lesssim \|f\|_{H^2}$.

For $S_{-1,\alpha}$, we rewrite the integrand as
\[
I\tilde Hf(y) - \frac{2(\cos\alpha\pi + y\cos(\alpha\pi/2))}{1 + 2y\cos(\alpha\pi/2) + y^2}\tilde Hf(0).
\]
For the first term, by Lemma \ref{If-Hk} and Lemma \ref{Hn-H2},
\[
\left\| \frac{xI\tilde Hf(x)}{1 + x^2} \right\|_{H^2} \lesssim \|\tilde Hf\|_{H^2} = \|H^{(1/\alpha)}f\|_{H^2} \lesssim \|f\|_{H^2}/\alpha.
\]
For the second term we have
\begin{align*}
\frac{x}{1 + x^2}&\int_0^x \frac{2(\cos\alpha\pi + y\cos(\alpha\pi/2))}{1 + 2y\cos(\alpha\pi/2) + y^2}dy\\
= \frac{x}{1 + x^2}&\left( \cos\frac{\alpha\pi}{2}\ln(1 + 2x\cos\frac{\alpha\pi}{2} + x^2) - 2\sin\frac{\alpha\pi}{2}\arctan\frac{x\sin\frac{\alpha\pi}{2}}{1 + x\cos\frac{\alpha\pi}{2}} \right)
\end{align*}
whose $\bar H^2$ norm is bounded, uniformly in $\alpha$, so the second term has the desired bound because $|\tilde Hf(0)| \lesssim \|\tilde Hf\|_{H^1} \lesssim \|f\|_{H^1}/\alpha$.

For $S_{n,\alpha}$ ($n = 0, 1$) we use the old integrand. By Lemma \ref{gIf-Hk}, Hardy's inequality and Lemma \ref{Hn-H2}, the contribution of the first term in $S_{0,\alpha}$ is
\[
\lesssim \left\| \frac{\tilde Hf(x) - \tilde Hf(0)}{x} \right\|_{H^1} + \|(\tilde Hf)'\|_{H^1} \lesssim \|\tilde Hf\|_{H^2}
\lesssim \|f\|_{H^2}/\alpha
\]
and the contribution of the first term in $S_{1,\alpha}$ is
\[
\lesssim \|x/(1 + x^2)\|_{C^2}\left\| \frac{1}{x}\int_0^x \tilde Hf(y)dy \right\|_{H^2} \lesssim \|\tilde Hf\|_{H^2} \lesssim \|f\|_{H^2}/\alpha.
\]
For the second term in $S_{0,\alpha}$ we have
\[
\frac{x}{1 + x^2}\int_0^x \frac{2(\cos(\alpha\pi/2) + y)}{1 + 2y\cos(\alpha\pi/2) + y^2}dy
= \frac{x\ln(1 + 2x\cos(\alpha\pi/2) + x^2)}{1 + x^2}
\]
and
\begin{align*}
\frac{1}{1 + x^2}&\int_0^x \frac{2y(\cos(\alpha\pi/2) + y)}{1 + 2y\cos(\alpha\pi/2) + y^2}dy\\
= \frac{1}{1 + x^2}&\left(2x - \int_0^x \frac{2y\cos(\alpha\pi/2)}{1 + 2y\cos(\alpha\pi/2) + y^2}dy \right)\\
= \frac{1}{1 + x^2}&\left(2x - \cos\frac{\alpha\pi}{2}\ln\left( 1 + 2x\cos\frac{\alpha\pi}{2} + x^2 \right) \right)\\
+ \frac{2}{1 + x^2}&\frac{\cos^2\frac{\alpha\pi}{2}}{\sin\frac{\alpha\pi}{2}}\arctan\frac{x\sin\frac{\alpha\pi}{2}}{1 + x\cos\frac{\alpha\pi}{2}}
\end{align*}
whose $H^2$ norms arae bounded, uniformly in $\alpha$, so they also have the desired bound as before.
\end{proof}

Then we upgrade the $H^2$ norm to the $\bar H^2$ norm.
\begin{lem}
For $0 < \alpha \le 1$, $n = 0, \pm1$, $T_{n,\alpha}$ and $S_{n,\alpha}$ are bounded from $Y$ to $\bar H^2_0$, with $\|T_{n,\alpha}\|_{Y\to\bar H^2} \le C$ and $\|S_{n,\alpha}\|_{Y\to\bar H^2} \le C/\alpha$.
\end{lem}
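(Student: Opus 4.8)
The plan is to use that $\bar H^2$ is the graph norm of $f \mapsto xf_x$ acting on $H^2$. Since the previous lemma already gives $T_{n,\alpha}f, S_{n,\alpha}f \in H^2$ with $T_{n,\alpha}f(0) = S_{n,\alpha}f(0) = 0$ and with the stated $H^2$ bounds, it suffices to bound $x\partial_x(T_{n,\alpha}f)$ and $x\partial_x(S_{n,\alpha}f)$ in $H^2$ by $C\|f\|_Y$, respectively $C\|f\|_Y/\alpha$. Write all of these operators in the common form $Af(x) = G(x)\int_0^x \Phi_f(y)\,dy$, where $G(x) = x/(1+x^2)$ for $T_{n,\alpha}$ (any $n$) and for $S_{n,\alpha}$ with $n \in \{-1,0\}$, and $G(x) = 1/(1+x^2)$ for $S_{1,\alpha}$, and $\Phi_f$ is the corresponding integrand. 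Then
\[
x\,(Af)'(x) = xG'(x)\int_0^x \Phi_f(y)\,dy + xG(x)\,\Phi_f(x).
\]

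The first term comes for free. In each case $xG'(x) = m(x)G(x)$ with $m$ an $\alpha$-independent rational function that is bounded together with all its derivatives: $m(x) = (1-x^2)/(1+x^2)$ when $G = x/(1+x^2)$, and $m(x) = -2x^2/(1+x^2)$ when $G = 1/(1+x^2)$. Hence $xG'(x)\int_0^x\Phi_f = m(x)\,Af(x)$, and since multiplication by $m$ is bounded on $H^2$, the previous lemma yields $\|xG'\int_0^x\Phi_f\|_{H^2} \lesssim \|Af\|_{H^2}$, which is $\le C\|f\|_Y$ for $T_{n,\alpha}$ and $\le C\|f\|_Y/\alpha$ for $S_{n,\alpha}$.

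For the boundary term $xG(x)\Phi_f(x)$ the point is that $xG(x)$ carries exactly the power of $x$ needed to absorb the negative powers of $y$ in $\Phi_f(y)$: here $xG(x) = x^2/(1+x^2)$ for $T_{n,\alpha}$ and for $S_{n,\alpha}$ with $n \in \{-1,0\}$, and $xG(x) = x/(1+x^2)$ for $S_{1,\alpha}$. Carrying out the multiplication, $xG(x)\Phi_f(x)$ is in every case (for $S_{-1,\alpha}$ see the next paragraph) a sum of two pieces: (i) one of the smooth, uniformly bounded multipliers $x^j/(1+x^2)$ with $0 \le j \le 2$ applied to $f$, to $\tilde Hf$, or to the constant $\tilde Hf(0)$; and (ii) $f'(0)$ or $\tilde Hf(0)$ times a fixed rational function whose numerator has degree at most $3$ and whose denominator is $(1+x^2)(1+2x\cos(\alpha\pi/2)+x^2)$. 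Since $1+2x\cos(\alpha\pi/2)+x^2 \ge 1+x^2$ for $x \ge 0$ and $\cos(\alpha\pi/2) \in [0,1]$, this denominator dominates $(1+x^2)^2$ uniformly in $\alpha$, so the rational functions in (ii) lie in $C^\infty$ with all derivatives bounded uniformly in $\alpha$ and decay at least like $1/x$ at infinity, hence belong to $H^2$ with $\alpha$-independent norm. Combining the multiplier estimate $\|gh\|_{H^2} \lesssim \|g\|_{C^2}\|h\|_{H^2}$ with Lemma \ref{Hn-H2} (which gives $\|\tilde Hf\|_{H^2} = \|H^{(1/\alpha)}f\|_{H^2} \lesssim \|f\|_{H^2}/\alpha$) and with the Sobolev bounds $|f'(0)| \lesssim \|f\|_{H^2}$ and $|\tilde Hf(0)| = |f'(0)|/(2\sin(\alpha\pi/2)) \lesssim \|f\|_{H^2}/\alpha$ (using the defining relation of $Y$ and $\sin(\alpha\pi/2) \ge \alpha$ on $(0,1]$) gives the claimed bounds for $T_{n,\alpha}$ and for $S_{n,\alpha}$ with $n \in \{0,1\}$.

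The one case that needs genuine care is $S_{-1,\alpha}$, whose integrand as written is not integrable at $0$; I would instead differentiate the regularized representation used in the proof of the previous lemma,
\[
S_{-1,\alpha}f(x) = \frac{x}{1+x^2}\,I\tilde Hf(x) - \frac{x}{1+x^2}\int_0^x \frac{2(\cos\alpha\pi + y\cos(\alpha\pi/2))\,\tilde Hf(0)}{1+2y\cos(\alpha\pi/2)+y^2}\,dy.
\]
Applying $x\partial_x$ to the first summand gives $\tfrac{1-x^2}{1+x^2}\cdot\tfrac{xI\tilde Hf(x)}{1+x^2} + \tfrac{x^2}{1+x^2}(I\tilde Hf)'(x)$; the first term is a uniformly bounded multiplier times a quantity controlled in $H^2$ by Lemma \ref{If-Hk} (namely $\|\tfrac{xI\tilde Hf}{1+x^2}\|_{H^2} \lesssim \|\tilde Hf\|_{H^2} \lesssim \|f\|_{H^2}/\alpha$), while the second equals $\big(\tilde Hf(x) - \tilde Hf(0) - x(\tilde Hf)'(0)\big)/(1+x^2)$ because $(I\tilde Hf)'(x) = \big(\tilde Hf(x)-\tilde Hf(0)-x(\tilde Hf)'(0)\big)/x^2$, so the factor $x^2/(1+x^2)$ exactly cancels the $x^{-2}$ and leaves a bounded multiplier times $\tilde Hf$ plus the constants $\tilde Hf(0)$ and $(\tilde Hf)'(0)$ times $1/(1+x^2)$ and $x/(1+x^2)$, all controlled by $C\|f\|_{H^2}/\alpha$ via Lemma \ref{Hn-H2} and Sobolev embedding. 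The second summand of $S_{-1,\alpha}f$ has exactly the form of the boundary terms treated in the previous paragraph. Adding the contributions and invoking the previous lemma's $H^2$ bounds gives the $\bar H^2$ bounds; the main (and only) subtlety is this cancellation in the $S_{-1,\alpha}$ boundary term — everything else is bookkeeping made routine by the identity $xG' = mG$, which lets the non-boundary part of $x(Af)'$ inherit its bound directly from the preceding lemma.
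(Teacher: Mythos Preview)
Your proof is correct and follows essentially the same route as the paper: both arguments reduce the $\bar H^2$ bound to controlling the boundary term $xG(x)\Phi_f(x)$ (which is precisely the expression the paper writes out for $S_{n,\alpha}$), while the non-boundary piece is absorbed by the previous lemma---your identity $xG'=mG$ just makes this absorption explicit. The only organizational difference is that for $T_{n,\alpha}$ the paper reduces to $T_{n,1}$ (citing \cite{ElJe}) whereas you treat it directly by the same boundary-term computation, and for $S_{-1,\alpha}$ you spell out the regularized form a bit more than the paper does; neither is a substantive departure.
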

\begin{proof}
Since
\[
(T_{n,1} - T_{n,\alpha})f = \frac{xf'(0)}{1 + x^2}g_{n,\alpha}(x)
\]
where all higher derivatives of $xg_{n,\alpha}'$ are bounded,
$T_{n,\alpha}$ can be controlled as before.

For $S_{n,\alpha}$, as before we can fall the derivative on the integral and it remains to bound
\[
\frac{x^{\min(n,0)+1}}{1 + x^2}\left( \tilde Hf(x) - \tilde Hf(0) - \frac{2x(\cos(\alpha\pi/2) + x)\tilde Hf(0)}{1 + 2x\cos(\alpha\pi/2) + x^2} \right)
\]
whose $H^2$ norm is $\lesssim \|\tilde Hf\|_{H^2} \lesssim \|f\|_{H^2}/\alpha$ by Lemma \ref{Hn-H2}.
\end{proof}

\begin{proof}[Proof of Lemma \ref{Lt-iso}]
The formal expression has been derived in Section 4.2 of \cite{ElJe}.
The $\bar H^2$ bound of $L^{-1}f$ comes from those of $T_{n,\alpha}f$ and $S_{n,\alpha}f$. Note that the only terms in $L^{-1}$ not covered by them are
\[
\frac{-x^3\sin(\alpha\pi/2)}{(1 + 2x\cos(\alpha\pi/2) + x^2)^2}
\int_0^\infty y\cos(\alpha\pi/2)h(y)dy
\]
and
\[
-\frac{x^3\cos(\alpha\pi/2)}{(1 + 2x\cos(\alpha\pi/2) + x^2)^2}
\int_0^\infty -y\sin(\alpha\pi/2)h(y)dy
\]
which cancel each other. To show the bound is uniform in $\alpha$,
it suffices to note that each appearance of $S_{n,\alpha}$ (via $h$) in $L^{-1}$ is acompanied by a factor of $\sin(\alpha\pi/2) < 2\alpha$,
which offsets the worse bounds of $S_{n,\alpha}$.
Finally, $L^{-1}f(0) = (L^{-1}f)'(0) = 0$ because the integrals vanish at 0, and they are multiplied by a factor of $x$.
\end{proof}

Recall that in terms of the new variable we need to solve
\[
F:=(1 + \lambda)xW_x + a\widetilde{UW_x} + (1 - \tilde H\tilde W)\tilde W=0
, \quad U_x = HW.
\]
\begin{lem}\label{analytic}
$F(W, \lambda, a): \bar H^2_0 \times \R \times \R \to H^2_0$ is analytic.
\end{lem}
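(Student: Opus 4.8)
The plan is to observe that, once the change of variables is unwound, $F$ is literally a polynomial map of $(W,\lambda,a)$ — a finite sum of bounded multilinear maps evaluated on the diagonal — and then to quote the elementary fact that a continuous polynomial between real Banach spaces is real-analytic. Writing $\tilde H=H^{(1/\alpha)}$ and expanding the advection term as in Section~\ref{Method}, one has
\[
F(W,\lambda,a)=xW_x+\lambda\,xW_x+a\,Q(W)\cdot xW_x+W-(\tilde HW)\,W ,
\]
where $\widetilde{UW_x}=Q(W)\cdot xW_x$ with
\[
Q(W)(x)=x^{-1/\alpha}\int_0^x w^{1/\alpha-1}(\tilde HW)(w)\,dw=\int_0^1 s^{1/\alpha-1}(\tilde HW)(sx)\,ds ,
\]
this last identity coming from $U/x=\tfrac1x\int_0^x HW$ after the change of variable $w=y^\alpha$. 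Thus $F$ is a sum of: a term linear in $W$; a term bilinear in $(\lambda,W)$; a term linear in $a$ and quadratic in $W$; the identity; and a term quadratic in $W$ — a polynomial of degree $\le3$ in $(W,\lambda,a)$, with no constant term.

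Next I would check that each constituent map is bounded. (i) $W\mapsto xW_x$ is bounded $\bar H^2_0\to H^2_0$ by the definition of $\bar H^2$, and $xW_x$ vanishes at $0$. (ii) Scalar multiplication $\R\times H^2\to H^2$ is bounded bilinear. (iii) $W\mapsto\tilde HW$ is bounded $\bar H^2_0\subset H^2_0\to H^2$ (norm $\lesssim1/\alpha$) by Lemma~\ref{Hn-H2}. (iv) $W\mapsto Q(W)$ is bounded $\bar H^2_0\to H^2$: since $\|g(s\cdot)\|_{H^2}\le s^{-1/2}\|g\|_{H^2}$ for $0<s\le1$, one gets $\|Q(W)\|_{H^2}\le\bigl(\int_0^1 s^{1/\alpha-3/2}\,ds\bigr)\|\tilde HW\|_{H^2}\lesssim\|W\|_{H^2}$, the integral converging because $1/\alpha>1/2$; this is the step that really uses the Hardy-type machinery of Section~\ref{Hardy} (cf.\ Lemmas~\ref{gIf-Hk} and~\ref{If-Hk}) together with Lemma~\ref{Hn-H2}. (v) Pointwise multiplication $H^2\times H^2\to H^2$ is bounded bilinear, because $H^1$ embeds in $L^\infty$ in one dimension, so one-dimensional $H^2$ is a Banach algebra. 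Moreover each product occurring in $F$ carries a factor ($xW_x$ or $W$) that vanishes at $0$, so all outputs lie in $H^2_0$ as required.

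Composing these bounded (multi)linear maps with sums and pointwise products, each of the five terms of $F$ is a bounded homogeneous polynomial map $\bar H^2_0\times\R\times\R\to H^2_0$ of degree $1$, $2$ or $3$; hence $F$ itself is a continuous polynomial map, and is therefore real-analytic. The only step with any content is (iv) — showing that the advection term $\widetilde{UW_x}$ genuinely factors through bounded operators, i.e.\ that $U/x$ is under control — and this is precisely where the Hardy inequalities of Section~\ref{Hardy} and the Hilbert-transform bound of Lemma~\ref{Hn-H2} enter. The remaining ingredients (the Banach-algebra property of $H^2$, and "continuous polynomial $\Rightarrow$ analytic") are standard.
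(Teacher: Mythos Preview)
Your argument is correct and follows the same overall strategy as the paper: both observe that $F$ is a polynomial map, that $H^2$ is a Banach algebra, and that the only nontrivial term is $\widetilde{UW_x}$, which is factored as $(\tilde U/y^{1/\alpha})\cdot(\alpha yW_y)=Q(W)\cdot xW_x$. The difference lies in how the $H^2$ bound on $Q(W)$ is obtained. The paper computes the first and second derivatives of $\tilde U(y)/y^{1/\alpha}$ explicitly via integration by parts and then invokes the weighted Hardy inequality of Lemma~\ref{wtHardy} (Kufner--Persson), tracking the constants to see that the $1/\alpha$ loss from Lemma~\ref{Hn-H2} is exactly compensated. Your route is more direct: writing $Q(W)(x)=\int_0^1 s^{1/\alpha-1}(\tilde HW)(sx)\,ds$ and using Minkowski together with the dilation estimate $\|g(s\cdot)\|_{H^2}\le s^{-1/2}\|g\|_{H^2}$ gives $\|Q(W)\|_{H^2}\le (1/\alpha-1/2)^{-1}\|\tilde HW\|_{H^2}\lesssim\|W\|_{H^2}$ in one stroke, with the same cancellation of the $1/\alpha$ factor. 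This avoids Section~\ref{Hardy} entirely, so your parenthetical remark that ``this is the step that really uses the Hardy-type machinery of Section~\ref{Hardy}'' is inaccurate for your own argument (it applies to the paper's version, not yours). Otherwise the proof is sound and arguably cleaner than the original.
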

\begin{proof}
Multiplicative closedness of $H^2$ takes care of every term but $\widetilde{UW_x}$, which we deal with now.
Since $UW_x = (U/x)(xW_x)$, in terms of the variable $y = x^\alpha$ we have
$\widetilde{UW_x}(y) = (\tilde U(y)/y^{1/\alpha})(\alpha y\tilde W_y)$.
For the second factor we have $\|y\tilde W_y\|_{H^2} \le \|\tilde W\|_{\bar H^2}$.
To bound the first factor we start from the identity $xU_x = xHW$.
In terms of $y$ it is $\alpha y\tilde U_y = y^{1/\alpha}\widetilde{HW}$, so
\[
\frac{\tilde U(y)}{y^{1/\alpha}} = \frac{1}{y^{1/\alpha}}\int_0^y \frac{\widetilde{HW}(z)}{\alpha z^{1-1/\alpha}}dz.
\]
Taking the derivative and integrating by parts we get
\[
\frac{d}{dy}\frac{\tilde U(y)}{y^{1/\alpha}}
= \frac{\widetilde{HW}(y)}{\alpha y} - \frac{1}{\alpha y^{1+1/\alpha}}
\int_0^y \frac{\widetilde{HW}(z)}{\alpha z^{1-1/\alpha}}dz
= \frac{1}{\alpha y^{1+1/\alpha}}\int_0^y \widetilde{HW}'(z)z^{1/\alpha}dz
\]
and similarly,
\[
\frac{d^2}{dy^2}\frac{\tilde U(y)}{y^{1/\alpha}}
= \frac{1}{\alpha y^{2+1/\alpha}}\int_0^y \widetilde{HW}''(z)z^{1+1/\alpha}dz.
\]
By Lemma \ref{wtHardy},
\[
\|\tilde U(y)/y^{1/\alpha}\|_{L^2}\lesssim C_\alpha\|\widetilde{HW}\|_{L^2}/\alpha, \quad
\|(\tilde U(y)/y^{1/\alpha})''\|_{L^2}\lesssim C_\alpha'\|\widetilde{HW}''\|_{L^2}/\alpha.
\]
The order of $C_\alpha$ and $C_\alpha''$ can be found in Chapter 0 of Kufner--Persson: Since the weight are $y^{\pm2-2/\alpha}$,
it corresponds to $\ep = \pm2 - 2/\alpha$ (and $p = 2$) in (0.4),
so $C_\alpha\lesssim 1/(1 - \ep) \le 1/(2/\alpha - 1) \le \alpha$ for $\alpha \in (0, 1)$. Hence
\[
\|\tilde U(y)/y^{1/\alpha}\|_{H^2}\lesssim \|\widetilde{HW}\|_{H^2}
\lesssim \|\tilde W\|_{H^2}/\alpha
\]
by Lemma \ref{Hn-H2}, so
\[
\|\widetilde{UW_x}\|_{H^2} \lesssim \|\tilde W\|_{\bar H^2} \cdot \alpha\|\tilde W\|_{H^2}/\alpha \le \|\tilde W\|_{\bar H^2}^2.
\]
\end{proof}

In particular,
\[
dF_{(\tilde{\bar W},0,0)}(V, \mu, 0) = \mu x\tilde{\bar W}_x + xV_x - \tilde{\bar W}\tilde HV - V\tilde H\tilde{\bar W} + V = LV + \mu x\tilde{\bar W}_x.
\]

\begin{lem}\label{dF-iso}
$dF_{(\tilde{\bar W},0,0)}(\cdot,\cdot,0)$ is an isomorphism from $X \times \R$ to $H^2_0$.
\end{lem}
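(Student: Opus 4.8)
The plan is to use the identity $dF_{(\tilde{\bar W},0,0)}(V,\mu,0) = LV + \mu w_0$, where $w_0 := x\tilde{\bar W}_x$, together with Lemma \ref{Lt-iso}, which says $L$ is an isomorphism of $X$ onto $Y$. Define the linear functional $\ell(g) := g_x(0) + 2\sin(\alpha\pi/2)\tilde Hg(0)$ on $H^2_0$, so that $Y = \{g \in H^2_0 : \ell(g) = 0\}$. It then suffices to check: (i) $\ell$ is bounded on $H^2_0$; (ii) $w_0 \in H^2_0$; (iii) $\ell(w_0) \ne 0$. Granting these, every $f \in H^2_0$ splits uniquely as $f = \big(f - \tfrac{\ell(f)}{\ell(w_0)}w_0\big) + \tfrac{\ell(f)}{\ell(w_0)}w_0$ with the first summand in $Y$, so $H^2_0 = Y \oplus \R w_0$; hence $(V,\mu) \mapsto LV + \mu w_0$ is the direct sum of the isomorphism $L\colon X \to Y$ with the isomorphism $\mu \mapsto \mu w_0$ of $\R$ onto $\R w_0$, and is therefore an isomorphism $X \times \R \to H^2_0$, its inverse $f \mapsto \big(L^{-1}(f - \tfrac{\ell(f)}{\ell(w_0)}w_0),\ \tfrac{\ell(f)}{\ell(w_0)}\big)$ being bounded since $L^{-1}$, $\ell$ and multiplication by $w_0$ all are.

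For (i): $|g_x(0)| \lesssim \|g\|_{H^2}$ by the Sobolev embedding $H^1 \hookrightarrow C^0$ applied to $g'$, and $|\tilde Hg(0)| = |H^{(1/\alpha)}g(0)| \lesssim \|H^{(1/\alpha)}g\|_{H^2} \lesssim \|g\|_{H^2}/\alpha$ by Lemma \ref{Hn-H2} (valid since $g(0)=0$); as $\sin(\alpha\pi/2) \le \alpha\pi/2$, this gives $|\ell(g)| \lesssim \|g\|_{H^2}$. For (ii): $\tilde{\bar W}(x) = -2\sin(\alpha\pi/2)\,x/(x^2 + 2\cos(\alpha\pi/2)x + 1)$ is a bounded rational function decaying like $x^{-1}$, so $\tilde{\bar W}_x$ and $w_0 = x\tilde{\bar W}_x$ lie in $H^2$, and $w_0(0) = 0$ because $\tilde{\bar W}_x(0)$ is finite; in fact $w_0 \in \bar H^2_0$.

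The one substantive point is (iii). Differentiating $w_0 = x\tilde{\bar W}_x$ at $0$ gives $w_{0,x}(0) = \tilde{\bar W}_x(0) = -2\sin(\alpha\pi/2)$, read off from the formula for $\tilde{\bar W}$. For the remaining term, setting $x = 0$ in the kernel of $\tilde H = H^{(1/\alpha)}$ gives $\tilde Hw_0(0) = -\tfrac{2}{\alpha\pi}\int_0^\infty \tfrac{w_0(y)}{y}\,dy = -\tfrac{2}{\alpha\pi}\int_0^\infty \tilde{\bar W}_y(y)\,dy = -\tfrac{2}{\alpha\pi}\big(\tilde{\bar W}(\infty)-\tilde{\bar W}(0)\big) = 0$, since $\tilde{\bar W}$ vanishes at both ends. (Alternatively: the kernel of $H^{(r)}$ is homogeneous of degree $0$, so $H^{(r)}$ commutes with the dilation generator $x\partial_x$, whence $\tilde Hw_0 = x(\tilde H\tilde{\bar W})_x = x(\widetilde{H\bar W})_x$, which vanishes at $0$.) Therefore $\ell(w_0) = \tilde{\bar W}_x(0) = -2\sin(\alpha\pi/2) \ne 0$ for $\alpha \in (0,1]$, which also shows $w_0 \notin Y$, and the proof is complete. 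The only minor care needed is in justifying the pointwise integral formula for $\tilde Hw_0(0)$, which is immediate from the rapid decay of $w_0$; no uniformity in $\alpha$ is claimed here, so the $1/\alpha$ factors above are harmless.
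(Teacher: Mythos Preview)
Your proof is correct and follows essentially the same approach as the paper: reduce to showing that $w_0 = x\tilde{\bar W}_x$ does not lie in $Y$, i.e., that $\ell(w_0) \ne 0$, and verify this by computing $(w_0)_x(0) = \tilde{\bar W}_x(0) = -2\sin(\alpha\pi/2)$ and $\tilde Hw_0(0) = 0$. You supply more detail than the paper (boundedness of $\ell$, membership $w_0 \in H^2_0$, the explicit splitting $H^2_0 = Y \oplus \R w_0$), and your two independent arguments for $\tilde Hw_0(0)=0$---direct integration of $\tilde{\bar W}_y$ and commutation of $H^{(r)}$ with $x\partial_x$---are both clean justifications of what the paper asserts in one line.
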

\begin{proof}
It suffices to show that $(x\tilde{\bar W}_x)_x + 2\tilde H(x\tilde{\bar W}_x)$ does not vanish at 0. This is because
$(x\tilde{\bar W}_x)_x(0) = \tilde{\bar W}_x(0) = -2\sin(\alpha\pi/2)$ and
$\tilde H(x\tilde{\bar W}_x)(0) = H(x\bar W_x)/\alpha = 0$,
\end{proof}

\begin{proof}[Proof of Theorem \ref{Thm}]
Let $W = \alpha\Omega$, $U = \alpha\Upsilon$ and $F = \alpha\Phi$. Then
\[
\Phi = (1 + \lambda)x\tilde\Omega_x + a\alpha\widetilde{\Upsilon\Omega_x} + (1 - \alpha\tilde H\tilde\Omega)\Omega.
\]
Note that every term in $\Phi$ is analytic from $\bar H^2_0$ to $H^2_0$,
with all derivatives uniform in $\alpha$ (the bound for $\alpha\tilde H\tilde\Omega$ comes from Lemma \ref{Hn-H2}. Also
\[
d\Phi_{(\tilde{\bar\Omega},0,0)}(V, \mu, 0) = LV + \mu x\tilde{\bar W}_x/\alpha
\]
where
\[
\frac{(x\tilde{\bar W}_x)_x + 2\tilde H(x\tilde{\bar W}_x)}{\alpha}
= -2\frac{\sin(\alpha\pi/2)}{\alpha}
\]
is bounded and bounded away from 0, uniformly in $\alpha$.
Hence $d\Phi_{(\tilde{\bar\Omega},0,0)}(\cdot, \cdot, 0)$ is invertible
and its inverse is analytic, with all derivatives bounded,
uniformly in $\alpha$. Now by the implicit function theorem,
there is $c > 0$ such that if $|a\alpha| < c$, then there are $W(\cdot;a) \in \bar H^2_0$ and $\lambda(a) \in \R$ such that $\Phi(W(\cdot;a), \lambda(a), a) = 0$. Then
\[
w(x, t) = \frac{\sgn x}{1 - t}W\left( \left| \frac{x}{(1 - t)^{1+\lambda(a)}} \right|^\alpha;a \right)
\]
is a self-similar solution. Since $W(\cdot;\alpha) \in \bar H^2_0\sub C^1$,
$w(\cdot, t) \in C^\alpha$. Finally, the analyticity of $W(\cdot;a)$ and $\lambda(a)$ in $a$ follows from Lemma \ref{analytic}.
\end{proof}

\end{document}